\newtheorem{Prop}{Proposition}[section]
\newtheorem{Thm}[Prop]{Theorem}
\newtheorem{Lem}[Prop]{Lemma}
\newtheorem{Rem}[Prop]{Remark}
\newtheorem{Ex}[Prop]{Example}
\theoremstyle{definition}
\newtheorem{Def}[Prop]{Definition}
\newcommand{\R}{{\mathbb R}}
\newcommand{\Z}{{\mathbb Z}}
\newcommand{\Map}{\mathrm{Map}}
\newcommand{\id}{\mathrm{id}}
\newcommand{\Aut}{\mathrm{Aut}}
\newcommand{\Fix}{\mathrm{Fix}}
\newcommand{\Inn}{\mathrm{Inn}}
\newcommand{\rnum}[1]{\expandafter{\romannumeral #1}}
\title{Flat connected finite quandles}
\author{Yoshitaka Ishihara} 
\address[Y.\ Ishihara]{Department of Mathematics, Hiroshima University, 
Higashi-Hiroshima 739-8526, Japan}
\author{Hiroshi Tamaru}
\address[H.\ Tamaru]{Department of Mathematics, Hiroshima University, 
Higashi-Hiroshima 739-8526, Japan}
\email{tamaru@math.sci.hiroshima-u.ac.jp}
\thanks{The second author was supported in part by KAKENHI (24654012, 26287012).} 
\date{}
\subjclass[2010]{53C35, 57M25} 
\keywords{Quandles, symmetric spaces, flat, quandle triplets} 
\begin{document}

\maketitle

\begin{abstract}
Quandles can be regarded as generalizations of symmetric spaces. 
In the study of symmetric spaces, 
the notion of flatness plays an important role. 
In this paper, we define the notion of flat quandles, 
by referring to the theory of Riemannian symmetric spaces, 
and classify flat connected finite quandles. 
\end{abstract}

\section{Introduction}

Quandles were introduced by Joyce $($\cite{Joyce}$)$, and have been used for studying knot theory. A set $ X $ with a binary operator $ \ast : X \times X \rightarrow X $ is called a quandle if it satisfies three axioms, derived from the Reidemeister moves of a classical knot. It is worthwhile to mention that quandles are also related to symmetric spaces. Recall that a $($Riemannian$)$ manifold $ M $ is said to be symmetric if it is equipped with a globally defined ``symmetry'' $ s_{x} : M \rightarrow M $ for each $ x \in M $. It has been known in \cite{Joyce} that every symmetric space is a quandle, by defining the binary operator $ y \ast x := s_{x}(y) $. From this correspondence, several notions and ideas for symmetric spaces can be transferred to quandles. There have already been several studies of quandles from this viewpoint $($\cite{K.T.W.}, \cite{Tamaru}, \cite{Vendramin}, \cite{Wada}$)$. These studies can be regarded as an approach to the theory of ``discrete symmetric spaces'' $($\cite{Tamaru}$)$.

In this paper, we define the notion of flat quandles, by referring to the theory of Riemannian symmetric spaces. A Riemannian symmetric space is said to be flat if the curvature tensor vanishes identically. It is known that a Riemannian symmetric space is flat if and only if the group of displacements, the group generated by compositions $ s_{x} \circ s_{y} $ of two symmetries, is commutative $($\cite{Loos}$)$. Note that one can easily define the group of displacements for a quandle, and we define the flatness of a quandle by the commutativity of the group of displacements.

The main result of this paper is an explicit classification of flat connected finite quandles. We prove the following theorem:

\begin{Thm}\label{Thm1.1}
A quandle is flat, connected, and finite if and only if it is isomorphic to a direct product of dihedral quandles with odd prime power cardinalities.
\end{Thm}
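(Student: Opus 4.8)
The plan is to show that flatness together with connectedness forces the quandle into an affine (Alexander) form over its displacement group, and then to pin down the relevant automorphism by elementary module theory over $\Z$.

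Write $s_x$ for the right translation $s_x(y)=y\ast x$, so that $x\ast y=s_y(x)$, and let $\mathrm{Dis}(X)=\langle s_x\circ s_y:x,y\in X\rangle$ be the displacement group; flatness means $\mathrm{Dis}(X)$ is abelian. The transvection subgroup $T(X)=\langle s_x\circ s_y^{-1}\rangle$ lies in $\mathrm{Dis}(X)$ (indeed $s_x s_y^{-1}=(s_x s_z)(s_y s_z)^{-1}$), is normal in $\Inn(X)$, and for a connected quandle it acts transitively on $X$. Hence, if $X$ is flat and connected, $T(X)$ is an abelian group acting transitively and faithfully, so its action is simply transitive. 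Fixing a base point $x_0$ and identifying $X$ with the additive group $D:=T(X)$ via $g\mapsto g(x_0)$ (so $x_0\leftrightarrow 0$), every element of $D$ acts as a translation. Since $s_{x_0}$ fixes $0$ and normalizes $D$, it acts on $D$ as a group automorphism $\sigma:=s_{x_0}\in\Aut(D)$; conjugating by translations, $s_y=t_y\,\sigma\,t_{-y}$, which yields the affine form
\[ x\ast y=\sigma(x)+(1-\sigma)(y),\qquad \sigma\in\Aut(D). \]

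Next I extract the two defining constraints. A direct computation gives $s_x\circ s_y^{-1}=t_{(1-\sigma)(x-y)}$, so $T(X)$ is the group of translations by $\mathrm{Im}(1-\sigma)$; thus connectedness (transitivity of $T(X)$) is equivalent to $\mathrm{Im}(1-\sigma)=D$, i.e. to $1-\sigma\in\Aut(D)$. For flatness, note $s_{x_0}^{2}=\sigma^{2}\in\mathrm{Dis}(X)$ while all translations $t_v$ lie in $T(X)\subseteq\mathrm{Dis}(X)$; commutativity of $\mathrm{Dis}(X)$ forces $\sigma^{2}t_v\sigma^{-2}=t_{\sigma^{2}v}$ to equal $t_v$ for every $v\in D$, i.e. $\sigma^{2}=\id$. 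Conversely, when $\sigma^{2}=\id$ one checks that each $s_x\circ s_y$ is itself a translation, so $\mathrm{Dis}(X)$ is abelian; hence, among connected affine quandles, flatness is exactly the condition $\sigma^{2}=\id$.

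It remains to classify finite abelian $D$ equipped with $\sigma\in\Aut(D)$ satisfying $\sigma^{2}=\id$ and $1-\sigma\in\Aut(D)$. From $\sigma^{2}=\id$ we obtain the identity $(1-\sigma)(1+\sigma)=1-\sigma^{2}=0$ on $D$; since $1-\sigma$ is invertible this gives $1+\sigma=0$, i.e. $\sigma=-\id$, whence $1-\sigma=2$. Invertibility of multiplication by $2$ on the finite abelian group $D$ forces $|D|$ to be odd (its $2$-primary part must vanish). Thus $X$ is the Takasaki quandle $x\ast y=2y-x$ on a finite abelian group of odd order, and the structure theorem writes $D\cong\prod_i\Z/p_i^{a_i}$ with the $p_i$ odd primes. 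Since both $\sigma=-\id$ and the operation act componentwise, this is a direct product of dihedral quandles $R_{p_i^{a_i}}$ of odd prime power order. The converse is immediate from the affine description: a finite product $\prod_i R_{p_i^{a_i}}$ is the Takasaki quandle on the odd-order group $\prod_i\Z/p_i^{a_i}$, for which $\sigma=-\id$ satisfies $\sigma^{2}=\id$ and $1-\sigma=2\in\Aut$, so it is flat, connected and finite. (The one-point quandle corresponds to the empty product.)

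I expect the main obstacle to be the structural reduction of the first two paragraphs — establishing that a flat connected quandle is affine with $\sigma$ a genuine group automorphism of $D=T(X)$. This is where the quandle-triplet correspondence does the real work: one must use simple transitivity of the abelian transvection group to coordinatize $X$, the normality of the displacement group in $\Inn(X)$ to recognize $s_{x_0}$ as a linear map, and some care with the displacement-group convention (products $s_x s_y$ rather than $s_x s_y^{-1}$) so that flatness captures the extra condition $\sigma^{2}=\id$ and is not automatic. Once the affine model and these two conditions are in hand, the remaining classification is short linear algebra over $\Z$.
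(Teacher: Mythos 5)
Your proposal is correct, and its skeleton is the same as the paper's: reduce a flat connected quandle to an abelian group acting simply transitively, recognize the symmetry at a base point as an involutive group automorphism $\sigma$, show $\sigma$ must be inversion, and invoke the structure theorem for finite abelian groups. What differs is the packaging, and your version buys some economy. The paper works with the displacement group $G^{0}(X,s)=\langle s_{x}\circ s_{y}\rangle$ and the quandle-triplet formalism: transitivity of $G^{0}$ (Lemma~\ref{Lem4.2}), trivial isotropy and $\sigma^{2}=\id$ (Proposition~\ref{Prop5.2}), $\sigma(g)=g^{-1}$ via $\Fix(\sigma,G)=\{e\}$ (Lemma~\ref{Lem6.2}), and then the splitting results (Propositions~\ref{Prop3.7} and~\ref{Prop3.9}) to identify $Q(G,\{0\},-\id)$ with a product of dihedral quandles, with oddness coming from connectivity of each factor (Example~\ref{Ex2.7}). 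You instead coordinatize by the transvection group $T(X)=\langle s_{x}\circ s_{y}^{-1}\rangle$ and pass directly to the Alexander form $x\ast y=\sigma(x)+(1-\sigma)(y)$, which makes both constraints explicit ($\sigma^{2}=\id$ is flatness, surjectivity of $1-\sigma$ is connectedness; your route is the paper's Proposition~\ref{Prop5.5} and Lemma~\ref{Lem6.2} in additive notation, since $\varphi(g)=g\sigma(g^{-1})$ is exactly $1-\sigma$), collapses the inversion step to the identity $(1-\sigma)(1+\sigma)=0$, and gets oddness from invertibility of $2$ rather than from dihedral connectivity, so the componentwise splitting needs no extra machinery. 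Two points to tighten: (i) you assert without proof that $T(X)$ is normal in $\Inn(X,s)$ and acts transitively on a connected quandle; this is a standard fact going back to Joyce, provable in two lines (all $s_{x}$ lie in a single coset of $T(X)$, so any $f\in\Inn(X,s)$ with $f(x)=y$ factors as $f=t\circ s_{x}^{k}$ with $t\in T(X)$, giving $t(x)=y$), but it is precisely the analogue of the paper's Lemma~\ref{Lem4.2} and should be recorded --- alternatively you could run the whole argument with $G^{0}(X,s)$, for which transitivity is easier; (ii) the equivalence ``connected iff $1-\sigma\in\Aut(D)$'' uses finiteness of $D$ (a surjective endomorphism of an abelian group need not be injective), so where you state it before restricting to finite quandles it should read $\zIm(1-\sigma)=D$.
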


Here the dihedral quandle with cardinality $ n $ is the set of $n$-equal dividing points on the unit circle $ S^{1} $, equipped with the natural symmetry. They can be regarded as a ``discrete $ S^{1} $''. Therefore, this theorem can be regarded as a discrete version of the following well-known fact: every flat connected compact Riemannian symmetric space is isomorphic to a flat torus, that is, a direct product of $ S^{1} $.

Note that the flatness is an important notion in the structure theory of Riemannian symmetric spaces. Therefore, flat quandles would be potentially useful for studying the structure theory of quandles, which will be studied in the forthcoming papers.

This paper is organized as follows. In Section~\ref{sec2}, we recall some necessary background on quandles, such as the dihedral quandles, connected quandles, and direct products of quandles. In Section~\ref{sec3}, we define quandle triplets, which correspond to homogeneous quandles. This notion is analogous to the notion of ``symmetric pairs'', and plays a fundamental role in the proof of Theorem~\ref{Thm1.1}. In Section~\ref{sec4}, we define the notion of flat quandles. We then study the structures of flat connected quandles in Section~\ref{sec5}, and flat connected finite quandles in Section~\ref{sec6}.

The authors would like to thank 
Takahiro Hashinaga, Kentaro Kimura, Akira Kubo, Takayuki Okuda, Yuichiro Taketomi, and Koshiro Wada 
for helpful comments.

\section{Preliminaries}\label{sec2}

In this section, we recall some necessary notions, such as the dihedral quandles, connected quandles, and direct products of quandles.

\subsection{Quandles and the dihedral quandles}

Quandles are usually defined by sets with binary operators satisfying three axioms, derived from the Reidemeister moves of a classical knot. However, we employ a formulation similar to symmetric spaces, as in \cite{Tamaru}. Let $ X $ be a set, and denote by
\begin{align*}
\Map(X,X) := \{ f : X \rightarrow X : \mbox{a map} \}.
\end{align*}

\begin{Def}\label{Def2.1}
Let $ X $ be a set, and consider a map
\begin{align*}
s: X \rightarrow \Map(X,X) : x \rightarrow s_{x}.
\end{align*}
Then the pair $ (X,s) $ is called a \textit{quandle} if
\begin{itemize}
\item[(Q1)]
For any $ x \in X $, $ s_{x}(x) = x $.
\item[(Q2)]
For any $ x \in X $, $ s_{x} $ is bijective.
\item[(Q3)]
For any $ x,y \in X $, $ s_{x} \circ s_{y} = s_{s_{x}(y)} \circ s_{x} $.
\end{itemize}
\end{Def}

For a quandle $ (X,s) $, the map $ s $ is called a \textit{quandle structure}. One knows that every connected Riemannian symmetric space is a quandle (\cite{Joyce}). Here we mention some other easy examples of quandles.

\begin{Ex}\label{Ex2.2}
The following $ (X,s) $ are quandles$:$
\begin{itemize}
\item[(1)]
The \textit{trivial quandle}$:$ $ X $ is any set and $ s_{x} := \id_{X} $ for any $ x \in X $.
\item[(2)]
The \textit{dihedral quandle of order $n$}$:$ $ X := \Z_{n} $ and, for any $ x,y \in X $,
\begin{align*}
s_{x}(y) := 2x-y.
\end{align*}
\end{itemize}
\end{Ex}

Here $ \Z_{n} $ is the cyclic group of order $ n $. In the remaining of this paper, we denote by $ R_{n} $ the dihedral quandle of order $ n $.

\begin{Def}\label{Def2.3}
Let $ (X,s^{X}) $ and $ (Y,s^{Y}) $ be quandles. Then $ f : X \rightarrow Y $ is called a \textit{homomorphism} if, for any $ x \in X $, it satisfies
\begin{align*}
f \circ s^{X}_{x} = s^{Y}_{f(x)} \circ f.
\end{align*}
\end{Def}

A bijective homomorphism is called an \textit{isomorphism}. If there exists an isomorphism between two quandles $ (X,s^{X}) $ and $ (Y,s^{Y}) $, then they are said to be \textit{isomorphic} and we write $ (X,s^{X}) \simeq (Y,s^{Y}) $.

\begin{Rem}\label{Rem2.4}
The dihedral quandle $ R_{n} $ is isomorphic to the set of $n$-equal dividing points on the unit circle $ S^{1} $, equipped with the natural symmetry. Recall that the natural symmetry $ s_{x} $ at $ x \in S^{1} \subset \R^{2} $ is the reflection with respect to the line $ \R x \subset \R^{2} $. Denote by $ R^{\prime}_{n} $ the set of $n$-equal dividing points on $ S^{1} $, and let us put numbers $ x_{0}, \ldots, x_{n-1} \in R^{\prime}_{n} $ clockwise $($or counterclockwise$)$. Then $ f(i) := x_{i} $ gives an isomorphism $ f : R_{n} \rightarrow R^{\prime}_{n} $.
\end{Rem}

\subsection{Homogeneous and connected quandles}

In this subsection, we recall the notions of homogeneous and connected quandles. One needs the automorphism groups and the inner automorphism groups of quandles.

An isomorphism from a quandle $ (X,s) $ onto $ (X,s) $ itself is called an \textit{automorphism} of $ (X,s) $. We denote the automorphism group of $ (X,s) $ by
\begin{align*}
\Aut(X,s) := \{ f : X \rightarrow X : \mbox{an automorphism} \}.
\end{align*}
It follows from (Q2) and (Q3) that $ s_{x} \in \Aut(X,s) $ for any $ x \in X $.

\begin{Def}\label{Def2.5}
Let $ (X,s) $ be a quandle. The group generated by $ \{ s_{x} \mid x \in X \} $ is called the \textit{inner automorphism group} of $ (X,s) $, and denoted by $ \Inn(X,s) $.
\end{Def}

\begin{Def}\label{Def2.6}
Let $ (X,s) $ be a quandle.
\begin{itemize}
\item[(1)]
$ (X,s) $ is said to be \textit{homogeneous} if $ \Aut(X,s) $ acts transitively on $ X $.
\item[(2)]
$ (X,s) $ is said to be \textit{connected} if $ \Inn(X,s) $ acts transitively on $ X $.
\end{itemize}
\end{Def}

One knows $ \Inn(X,s) \subset \Aut(X,s) $. Therefore, a connected quandle is homogeneous. Note that the converse does not hold in general. For example, the trivial quandle $ (X,s) $ is always homogeneous, but it is connected if and only if the cardinality of $ X $ is $ 1 $. The following gives another example.

\begin{Ex}\label{Ex2.7}
The dihedral quandle $ R_{n} $ is always homogeneous, but it is connected if and only if $ n $ is odd.
\end{Ex}

\subsection{The direct products of quandles}

In this subsection, we define the notion of direct products of quandles, and study their properties. Let $ M, M^{\prime}, N, N^{\prime} $ be sets. For maps $ f : M \rightarrow M^{\prime} $ and $ g : N \rightarrow N^{\prime} $, we define
\begin{align*}
f \times g : M \times N \rightarrow M^{\prime} \times N^{\prime} : (x,y) \mapsto (f(x),g(y)).
\end{align*}
One can easily show the following lemma.

\begin{Lem}\label{Lem2.8}
Let $ (X,s^{X}) $ and $ (Y,s^{Y}) $ be quandles. Then the pair $ (X \times Y , s^{X} \times s^{Y}) $ is also a quandle.
\end{Lem}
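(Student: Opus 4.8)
The plan is to verify directly that the product structure satisfies the three quandle axioms (Q1)--(Q3), deducing each one from the corresponding axiom on the two factors. First I need to pin down the notation: the structure $ s^{X} \times s^{Y} $ should be read as the map sending $ (x,y) \in X \times Y $ to $ s^{X}_{x} \times s^{Y}_{y} \in \Map(X \times Y, X \times Y) $, where the right-hand side is the product-of-maps notation introduced just above the lemma. Thus $ (s^{X} \times s^{Y})_{(x,y)}(x',y') = (s^{X}_{x}(x'), s^{Y}_{y}(y')) $. With this convention fixed, every axiom reduces to a slotwise computation.

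For (Q1), I would simply compute $ (s^{X} \times s^{Y})_{(x,y)}(x,y) = (s^{X}_{x}(x), s^{Y}_{y}(y)) = (x,y) $, applying (Q1) for $ (X,s^{X}) $ and for $ (Y,s^{Y}) $ in the two coordinates. For (Q2), I note that $ s^{X}_{x} \times s^{Y}_{y} $ is a product of two bijections, by (Q2) for each factor, and is therefore itself bijective, with inverse $ (s^{X}_{x})^{-1} \times (s^{Y}_{y})^{-1} $.

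The only step requiring a little care is (Q3), and even this is essentially bookkeeping. The key algebraic identity I would record first is that composition distributes over products of maps, namely $ (f \times g) \circ (f' \times g') = (f \circ f') \times (g \circ g') $, which is immediate from the definition of $ f \times g $. Using this, the left-hand side of (Q3) becomes $ (s^{X}_{x} \circ s^{X}_{x'}) \times (s^{Y}_{y} \circ s^{Y}_{y'}) $; applying (Q3) in each factor rewrites this as $ (s^{X}_{s^{X}_{x}(x')} \times s^{Y}_{s^{Y}_{y}(y')}) \circ (s^{X}_{x} \times s^{Y}_{y}) $. On the other hand, the ``new base point'' is $ (s^{X} \times s^{Y})_{(x,y)}(x',y') = (s^{X}_{x}(x'), s^{Y}_{y}(y')) $, so the right-hand side of (Q3) equals exactly this same expression. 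Matching the two sides completes the verification.

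Since the whole argument is a direct check, I do not anticipate any genuine obstacle. The only point demanding attention is keeping the nested subscripts straight in (Q3), and stating the distributivity of composition over products explicitly beforehand, so that the factor-wise application of (Q3) is transparent.
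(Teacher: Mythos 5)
Your proof is correct and is exactly the direct verification the paper has in mind: the paper omits the proof entirely, remarking only that ``one can easily show'' the lemma, and your slotwise check of (Q1)--(Q3) using the identity $(f \times g) \circ (f' \times g') = (f \circ f') \times (g \circ g')$ is the intended argument. No gaps; the careful bookkeeping in (Q3) is handled correctly.
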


This quandle $ (X \times Y , s^{X} \times s^{Y}) $ is called the \textit{direct product of quandles $ (X,s^{X}) $ and $ (Y,s^{Y}) $}, and denoted by $ (X,s^{X}) \times (Y,s^{Y}) $. For the direct products of three quandles, one can easily see that 
\begin{align*}
((X,s^{X}) \times (Y,s^{Y})) \times (Z,s^{Z}) \simeq (X,s^{X}) \times ((Y,s^{Y}) \times (Z,s^{Z})).
\end{align*}
Hence we denote it by $ (X,s^{X}) \times (Y,s^{Y}) \times (Z,s^{Z}) $.

\begin{Prop}\label{Prop2.9}
Let $ (X,s^{X}) $ and $ (Y,s^{Y}) $ be quandles. Then we have
\begin{itemize}
\item[(1)]
$ \Inn((X,s^{X}) \times (Y,s^{Y})) \subset \Inn(X,s^{X}) \times \Inn(Y,s^{Y}) $.
\item[(2)]
The direct product $ (X,s^{X}) \times (Y,s^{Y}) $ is connected if and only if both of $ (X,s^{X}) $ and $ (Y,s^{Y}) $ are connected.
\end{itemize}
\end{Prop}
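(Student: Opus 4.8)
The plan is to treat the two assertions separately, since (1) is essentially formal while the real content of (2) lies in its ``if'' direction. For (1), recall that $ \Inn((X,s^{X}) \times (Y,s^{Y})) $ is by definition the subgroup generated by the symmetries $ s^{X \times Y}_{(x,y)} $. Since the product quandle structure satisfies $ s^{X \times Y}_{(x,y)} = s^{X}_{x} \times s^{Y}_{y} $, each such generator acts coordinatewise as the pair $ (s^{X}_{x}, s^{Y}_{y}) \in \Inn(X,s^{X}) \times \Inn(Y,s^{Y}) $. As $ \Inn(X,s^{X}) \times \Inn(Y,s^{Y}) $ is a group acting on $ X \times Y $ and contains every generator of $ \Inn((X,s^{X}) \times (Y,s^{Y})) $, it contains the entire subgroup they generate. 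This establishes (1), and I expect no obstacle here.

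For the ``only if'' direction of (2), I would use the two projections $ \pi_{X} : X \times Y \rightarrow X $ and $ \pi_{Y} : X \times Y \rightarrow Y $. A direct check from the formula for $ s^{X} \times s^{Y} $ shows each projection is a surjective homomorphism in the sense of Definition~\ref{Def2.3}. I would then record the general fact that a surjective homomorphic image of a connected quandle is connected: if $ f : Q \rightarrow Q^{\prime} $ is onto and $ g = (s_{c_{1}})^{\epsilon_{1}} \cdots (s_{c_{k}})^{\epsilon_{k}} \in \Inn(Q) $, then the homomorphism identity $ f \circ s_{c} = s_{f(c)} \circ f $ (and its inverse form) gives $ f \circ g = g^{\prime} \circ f $ with $ g^{\prime} = (s_{f(c_{1})})^{\epsilon_{1}} \cdots (s_{f(c_{k})})^{\epsilon_{k}} \in \Inn(Q^{\prime}) $, so transitivity of $ \Inn(Q) $ on $ Q $ pushes forward to transitivity of $ \Inn(Q^{\prime}) $ on $ Q^{\prime} $. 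Applying this to $ \pi_{X} $ and $ \pi_{Y} $ shows that connectedness of the product forces connectedness of each factor.

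For the ``if'' direction, which I expect to be the main obstacle, assume both factors are connected and fix $ (a_{1},b_{1}), (a_{2},b_{2}) \in X \times Y $. The difficulty is that, by (1), $ \Inn((X,s^{X}) \times (Y,s^{Y})) $ need not equal the full product $ \Inn(X,s^{X}) \times \Inn(Y,s^{Y}) $: a word in the generators yields an $ X $-component word and a $ Y $-component word with the \emph{same} length and exponent pattern, so one cannot simply glue together independently chosen $ g_{X} $ and $ g_{Y} $. My plan is to move the two coordinates one at a time. Using connectedness of $ X $, write $ g_{X} = (s^{X}_{x_{1}})^{\epsilon_{1}} \cdots (s^{X}_{x_{k}})^{\epsilon_{k}} $ with $ g_{X}(a_{1}) = a_{2} $, and set $ h := (s^{X \times Y}_{(x_{1},b_{1})})^{\epsilon_{1}} \cdots (s^{X \times Y}_{(x_{k},b_{1})})^{\epsilon_{k}} $, freezing every second coordinate at $ b_{1} $. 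Then $ h = g_{X} \times (s^{Y}_{b_{1}})^{\epsilon_{1} + \cdots + \epsilon_{k}} $, since powers of the single symmetry $ s^{Y}_{b_{1}} $ commute, and by (Q1) this $ Y $-component fixes $ b_{1} $; hence $ h(a_{1},b_{1}) = (a_{2},b_{1}) $. Symmetrically, connectedness of $ Y $ together with the same device, now freezing the first coordinate at $ a_{2} $, produces $ h^{\prime} $ with $ h^{\prime}(a_{2},b_{1}) = (a_{2},b_{2}) $, so $ h^{\prime} \circ h $ carries $ (a_{1},b_{1}) $ to $ (a_{2},b_{2}) $. The crux is exactly this (Q1)-based trick of freezing the inactive coordinate at a basepoint, so that the accumulated powers of a single symmetry act trivially there; this is what circumvents the failure of $ \Inn((X,s^{X}) \times (Y,s^{Y})) $ to be the full direct product of the inner automorphism groups.
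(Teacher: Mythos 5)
Your proposal is correct, and on the parts with real content it follows the paper's own proof: your argument for (1) is the same generator argument, and your ``if'' direction of (2) --- freezing the inactive coordinate at a basepoint so that the resulting power of a single symmetry fixes that basepoint by (Q1) --- is exactly the paper's construction (the paper simply asserts $ f(x,y) = (x^{\prime},y) $ for its word $ f $; you make the exponent-sum reason explicit, a small gain in rigor). The one place you genuinely differ is the ``only if'' direction of (2). The paper gets it in one line from (1): if the product is connected, any point $ (x,y) $ can be sent to $ (x^{\prime},y) $ by some element of $ \Inn((X,s^{X}) \times (Y,s^{Y})) $, and by (1) that element has the form $ f \times g $ with $ f \in \Inn(X,s^{X}) $, so $ f(x) = x^{\prime} $; hence each factor is connected. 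You instead prove a standalone lemma --- a surjective homomorphic image of a connected quandle is connected --- and apply it to the projections $ \pi_{X} $, $ \pi_{Y} $, which you first check are quandle homomorphisms. This is sound, including your handling of negative exponents via $ f \circ s_{c}^{-1} = s_{f(c)}^{-1} \circ f $, and it buys a reusable general fact at the price of a slightly longer argument; the paper's version is shorter but is tied to the product situation because it leans on (1).
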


\begin{proof}
We show (1). Recall that $ \Inn((X,s^{X}) \times (Y,s^{Y})) $ is generated by
\begin{align*}
\{ (s^{X} \times s^{Y})_{(x,y)} \mid (x,y) \in X \times Y \}.
\end{align*}
Hence one can easily complete the proof of (1) by
\begin{align*}
(s^{X} \times s^{Y})_{(x,y)} = (s^{X}_{x} \times s^{Y}_{y}) \in \Inn(X,s^{X}) \times \Inn(Y,s^{Y}).
\end{align*}

We show (2). The ``only if'' part directly follows from (1). We show the ``if'' part. Assume that both of $ (X,s^{X}) $ and $ (Y,s^{Y}) $ are connected. Take any $ (x, y), (x^{\prime}, y^{\prime}) \in X \times Y $. Since $ (X,s^{X}) $ is connected, there exist $ x_{1}, \ldots, x_{n} \in X $ and $ \varepsilon_{1}, \ldots, \varepsilon_{n} \in \{ \pm1 \} $ such that
\begin{align*}
(s^{X}_{x_{1}})^{\varepsilon_{1}} \circ \cdots \circ (s^{X}_{x_{n}})^{\varepsilon_{n}}(x) = x^{\prime}.
\end{align*}
Then one has
\begin{align*}
f:= (s^{X} \times s^{Y})_{(x_{1},y)}^{\varepsilon_{1}} \circ \cdots \circ (s^{X} \times s^{Y})_{(x_{n},y)}^{\varepsilon_{n}} \in \Inn((X,s^{X}) \times (Y,s^{Y})),
\end{align*}
which satisfies $ f(x,y) = (x^{\prime},y) $. Similarly, since $ (Y,s^{Y}) $ is connected, there exists $ g \in \Inn((X,s^{X}) \times (Y,s^{Y})) $ such that $ g(x^{\prime},y) = (x^{\prime},y^{\prime}) $. This shows that $ \Inn((X,s^{X}) \times (Y,s^{Y})) $ acts transitively on $ X \times Y $.
\end{proof}

\section{The quandle triplets}\label{sec3}

In this section, we define and study quandle triplets. Note that the idea of quandle triplets has been introduced by Joyce (\cite{Joyce}, Section 7). Here we shall reformulate it in a similar way to ``symmetric pairs'', which play a fundamental role in the study of symmetric spaces.

\subsection{Quandle triplets and homogeneous quandles}

In this subsection, we define quandle triplets, and give a correspondence to homogeneous quandles. Let $ G $ be a group, and $ e $ be the unit element of $ G $. For a map $ \sigma : G \rightarrow G $, we denote by
\begin{align*}
\Fix(\sigma,G) := \{ g \in G \mid \sigma(g) = g \}.
\end{align*}
We also denote by $ \Aut(G) $ the automorphism group of $ G $ as a group.

\begin{Def}\label{Def3.1}
Let $ K $ be a subgroup of $ G $ and $ \sigma \in \Aut(G) $. Then $ (G,K,\sigma) $ is called a \textit{quandle triplet} if $ K \subset \Fix(\sigma,G) $.
\end{Def}

We give a correspondence between quandle triplets and homogeneous quandles. First of all, we construct a homogeneous quandle from a quandle triplet. Note that the following has essentially been known by Joyce (\cite{Joyce}, Section 7).

\begin{Prop}\label{Prop3.2}
Let $ (G,K,\sigma) $ be a quandle triplet. We define $ s $ as follows$:$
\begin{align*}
s_{[g]}([h]) := [g\sigma(g^{-1}h)] \quad ([g],[h] \in G/K).
\end{align*}
Then we have
\begin{itemize}
\item[(1)]
$ s : G/K \rightarrow \Map(G/K,G/K) $ is well-defined.
\item[(2)]
$ s $ is a quandle structure on $ G/K $.
\item[(3)]
$ (G/K,s) $ is a homogeneous quandle.
\end{itemize}
\end{Prop}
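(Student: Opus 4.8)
The plan is to verify all three assertions by direct computation from the defining formula, using only that $\sigma$ is a group automorphism (so $\sigma(e)=e$, $\sigma(ab)=\sigma(a)\sigma(b)$, and $\sigma(a^{-1})=\sigma(a)^{-1}$) together with the hypothesis that $\sigma$ fixes $K$ pointwise. First I would establish (1), which is the step that genuinely uses $K \subset \Fix(\sigma,G)$. One must check that $[g\sigma(g^{-1}h)]$ is unchanged when the representatives $g$ and $h$ are replaced by $gk$ and $hk'$ with $k,k' \in K$. Replacing $h$ by $hk'$ produces an extra right factor $\sigma(k')=k'$, which disappears upon passing to the coset; replacing $g$ by $gk$ introduces factors $\sigma(k^{-1})=k^{-1}$ and $k$ that cancel. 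Thus well-definedness is immediate once $\sigma(k)=k$ is invoked, and both the subscript and the argument must be treated.

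For (2), axiom (Q1) is immediate since $s_{[g]}([g]) = [g\sigma(e)] = [g]$. For (Q2) I would exhibit the inverse of $s_{[g]}$ explicitly: since $\sigma^{-1} \in \Aut(G)$ also fixes $K$ pointwise, the map $[h] \mapsto [g\sigma^{-1}(g^{-1}h)]$ is well-defined by the same argument as in (1), and a short computation shows it is a two-sided inverse of $s_{[g]}$. (Alternatively, $h \mapsto g\sigma(g^{-1}h)$ is a bijection of $G$ that descends to $G/K$.) Axiom (Q3) is the longest computation: applying both sides to an arbitrary $[\ell]$ and repeatedly using multiplicativity of $\sigma$, the left-hand side becomes $[g\sigma(g^{-1}h)\sigma^{2}(h^{-1}\ell)]$; for the right-hand side, writing $p := g\sigma(g^{-1}h)$ and $q := g\sigma(g^{-1}\ell)$ one simplifies $p^{-1}q = \sigma(h^{-1}\ell)$, so that $s_{[p]}([q]) = [p\sigma(p^{-1}q)]$ yields the same expression. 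Matching the two sides is the crux.

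For (3), I would use the left-translation action of $G$ on $G/K$ given by $L_{a}([h]) := [ah]$. A direct check shows each $L_{a}$ is a quandle automorphism: both $L_{a}(s_{[g]}([h]))$ and $s_{[ag]}(L_{a}([h]))$ reduce to $[ag\sigma(g^{-1}h)]$, since the $a$-factors cancel inside $\sigma$ via $(ag)^{-1}(ah)=g^{-1}h$. As $L_{hg^{-1}}([g]) = [h]$, these automorphisms act transitively, so $\Aut(G/K,s)$ acts transitively and $(G/K,s)$ is homogeneous.

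I expect the main obstacle to be the verification of (Q3): although it is only bookkeeping, one must keep careful track of where $\sigma$ is applied, and the appearance of $\sigma^{2}$ on both sides is the decisive check. The only conceptually essential use of the hypothesis $K \subset \Fix(\sigma,G)$, by contrast, occurs already in (1).
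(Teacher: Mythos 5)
Your proposal is correct and follows essentially the same route as the paper: well-definedness via $K \subset \Fix(\sigma,G)$, axiom (Q2) by exhibiting the explicit inverse $[h] \mapsto [g\sigma^{-1}(g^{-1}h)]$, and homogeneity via the transitive left-translation action of $G$ on $G/K$, which lands in $\Aut(G/K,s)$. The only difference is that you carry out in detail the computations (notably (Q3) and the coset checks) that the paper leaves as ``can be checked directly.''
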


\begin{proof}
Let $ (G,K,\sigma) $ be a quandle triplet. Then one can directly prove (1) by using $ K \subset \Fix(\sigma,G) $.

We show (2). Conditions (Q1) and (Q3) can be checked directly. One can show (Q2) as follows: for each $ [g] \in G/K $, the map given by
\begin{align*}
s^{-1}_{[g]}([h]) := [g \sigma^{-1}(g^{-1}h)] \quad ([h] \in G/K)
\end{align*}
is well-defined and coincides with the inverse map of $ s_{[g]} $.

We show (3). We denote by $ \phi $ the natural action of $ G $ on $ G/K $, which is transitive. One can see that
\begin{align*}
\phi(G) \subset \Aut(G/K,s),
\end{align*}
which yields that $ (G/K,s) $ is homogeneous.
\end{proof}

The quandle constructed from a quandle triplet $ (G,K,\sigma) $ in the way of Proposition~\ref{Prop3.2} is denoted by $ Q(G,K,\sigma) $.

We next construct a quandle triplet from a homogeneous quandle. For an action of a group $ G $ on a set $ X $, we denote the isotropy subgroup at $ x \in X $ by
\begin{align*}
G_{x} := \{ g \in G \mid g.x = x \}.
\end{align*}
Here we denote by $ g.x $ the action of $ g \in G $ on $ x \in X $.

\begin{Prop}\label{Prop3.3}
Let $ (X,s) $ be a quandle, $ G $ be a subgroup of $ \Aut(X,s) $, and $ x \in X $. Assume that $ s_{x} G s^{-1}_{x} \subset G $ holds, that is, 
\begin{align*}
\sigma : G \rightarrow G : g \mapsto s_{x} \circ g \circ s^{-1}_{x}
\end{align*}
is well-defined. Then we have
\begin{itemize}
\item[(1)]
$ (G,G_{x},\sigma) $ is a quandle triplet.
\item[(2)]
If $ G $ acts on $ X $ transitively, then $ Q(G,G_{x},\sigma) \simeq (X,s) $.
\end{itemize}
\end{Prop}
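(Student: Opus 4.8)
The plan rests on one identity used repeatedly: for $g \in \Aut(X,s)$ and $y \in X$, the homomorphism condition $g \circ s_y = s_{g(y)} \circ g$ rearranges to $g \circ s_y \circ g^{-1} = s_{g(y)}$. For part (1), note first that $G_x$ is automatically a subgroup of $G$, and that $\sigma$, being conjugation by the bijection $s_x$, is a group homomorphism and is injective. Since a quandle triplet requires $\sigma \in \Aut(G)$, I must also check that $\sigma$ is surjective, i.e. $s_x^{-1} G s_x \subseteq G$; I expect this to be the one delicate point (discussed below). Granting $\sigma \in \Aut(G)$, it remains to verify $G_x \subseteq \Fix(\sigma,G)$: for $g \in G_x$ we have $g(x) = x$, so the identity gives $g \circ s_x \circ g^{-1} = s_{g(x)} = s_x$, hence $s_x \circ g \circ s_x^{-1} = g$, which is precisely $\sigma(g) = g$.

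For part (2), transitivity of the $G$-action makes the orbit map $F : G/G_x \to X$, $[g] \mapsto g(x)$, a well-defined bijection: it is well-defined because $g(x)$ is unchanged under $g \mapsto gk$ with $k \in G_x$, and bijective by the orbit--stabilizer correspondence. It then remains to show that $F$ intertwines the quandle structures, i.e. $F(s^{Q}_{[g]}([h])) = s_{F([g])}(F([h]))$, where $s^{Q}_{[g]}([h]) = [g\,\sigma(g^{-1}h)]$ is the structure of $Q(G,G_x,\sigma)$ from Proposition~\ref{Prop3.2}. Evaluating the left-hand side at $x$ and using $s_x^{-1}(x) = x$ gives $g\bigl(s_x(g^{-1}(h(x)))\bigr)$, while the right-hand side $s_{g(x)}(h(x))$ equals the same expression by $s_{g(x)} = g \circ s_x \circ g^{-1}$. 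Hence $F$ is a quandle isomorphism and $Q(G,G_x,\sigma) \simeq (X,s)$.

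The genuinely delicate step is the surjectivity of $\sigma$ in part (1), since the bare hypothesis $s_x G s_x^{-1} \subseteq G$ supplies only one inclusion. It does hold in the cases of interest: when $G$ is finite an injective endomorphism is automatically surjective, and more generally it suffices that $\sigma$ have finite order, in which case $\sigma^{-1} = \sigma^{m-1}$ for the order $m$. Everything else is a direct unwinding of definitions once the conjugation identity $g \circ s_y \circ g^{-1} = s_{g(y)}$ and the relation $s_x(x) = x$ are in hand, so I would flag the surjectivity point explicitly rather than let it pass silently, and then present part (2) as the clean orbit-map computation above.
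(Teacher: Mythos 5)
Your proof is correct and essentially identical to the paper's: part (1) is the same verification of $G_{x} \subseteq \Fix(\sigma,G)$ from $g \circ s_{x} = s_{g.x} \circ g = s_{x} \circ g$, and part (2) uses the same orbit map $[g] \mapsto g.x$ with the same computation via $s_{g.x} = g \circ s_{x} \circ g^{-1}$ and $s_{x}(x) = x$. The only divergence is your flag on the surjectivity of $\sigma$, and it is a fair one: the paper simply writes ``It is clear that $\sigma \in \Aut(G)$,'' whereas the literal hypothesis $s_{x} G s_{x}^{-1} \subseteq G$ yields only an injective endomorphism, and for infinite $G$ an injective endomorphism need not be onto. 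The gap is harmless in the paper's applications: Proposition~\ref{Prop3.3} is invoked only through Lemma~\ref{Lem5.1}, where $G = G^{0}(X,s)$, and there the reverse inclusion $s_{x}^{-1} G s_{x} \subseteq G$ also holds (write $s_{x}^{-1} \circ g \circ s_{x} = g \circ s_{g^{-1}.x}^{-1} \circ s_{x}$ and note $s_{y}^{-1} \circ s_{x} = (s_{y} \circ s_{y})^{-1} \circ (s_{y} \circ s_{x}) \in G^{0}(X,s)$), so $\sigma$ is genuinely an automorphism; and in the finite setting of Section~\ref{sec6}, injectivity alone suffices, exactly as you observe.
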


\begin{proof}
Let $ (X,s) $ be a quandle. We show (1). It is clear that $ \sigma \in \Aut(G) $. Take any $ g \in G_{x} $. Since $ g \in \Aut(X,s) $ and $ g.x = x $, we have
\begin{align*}
g \circ s_{x} = s_{g.x} \circ g = s_{x} \circ g.
\end{align*}
Then one has
\begin{align*}
\sigma(g) = s_{x} \circ g \circ s^{-1}_{x} = g \circ s_{x} \circ s^{-1}_{x} = g.
\end{align*}
This shows that $ g \in \Fix(\sigma,G) $, which completes the proof of (1).

We show (2). One knows that $ (G,G_{x},\sigma) $ is a quandle triplet from (1). Denote by $ s^{\prime} $ the quandle structure of $ Q(G,G_{x},\sigma) $, that is, $ (G/G_{x}, s^{\prime}) = Q(G,G_{x},\sigma) $. Consider the mapping
\begin{align*}
f : G/G_{x} \rightarrow X : [g] \mapsto g.x.
\end{align*}
We show that $ f $ is an isomorphism. Since the action of $ G $ on $ X $ is transitive, one knows that $ f $ is bijective. In order to show that $ f $ is a homomorphism, take any $ [g],[h] \in G/G_{x} $. Since $ g \in G \subset \Aut(X,s) $, we have $ g \circ s_{x} \circ g^{-1} = s_{g.x} $. This yields that
\begin{align*}
f \circ s^{\prime}_{[g]}([h]) &= (g \sigma(g^{-1}h)).x = g \circ s_{x} \circ g^{-1} h \circ s^{-1}_{x}(x) = s_{g.x} (h.x),\\
s_{f([g])} \circ f([h]) &= s_{g.x} (h.x).
\end{align*}
This shows that $ f $ is a homomorphism, which completes the proof.
\end{proof}

In the case of $ G = \Aut(X,s) $, the above proposition has been known in \cite{Joyce}. We gave a slight generalization, since $ G $ will be taken as the group of displacements in the latter argument.

We here summarize the correspondence between quandle triplets and homogeneous quandles. Note that the following is a direct corollary of Propositions~\ref{Prop3.2} and \ref{Prop3.3}

\begin{Thm}[\cite{Joyce}]\label{Thm3.4}
If $ (G,K,\sigma) $ is a quandle triplet, then $ Q(G,K,\sigma) $ is a homogeneous quandle. Conversely, all homogeneous quandles can be constructed in this way up to isomorphism.
\end{Thm}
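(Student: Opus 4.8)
The plan is to deduce the statement directly from Propositions~\ref{Prop3.2} and \ref{Prop3.3}, which together already encode both implications; the theorem is a formal corollary, so the work consists only in matching the hypotheses. The first assertion requires nothing new: if $(G,K,\sigma)$ is a quandle triplet, then Proposition~\ref{Prop3.2}(3) states precisely that $Q(G,K,\sigma)$ is a homogeneous quandle.

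For the converse, let $(X,s)$ be an arbitrary homogeneous quandle. The idea is to realize $(X,s)$ as arising from a quandle triplet by feeding it into Proposition~\ref{Prop3.3}, so the task reduces to choosing a suitable group $G$ and a base point $x$, and then verifying the hypotheses of that proposition. I would take $G := \Aut(X,s)$ and fix any $x \in X$. Two conditions then have to be checked: that $\sigma : g \mapsto s_{x} \circ g \circ s_{x}^{-1}$ is well-defined as a map $G \to G$, equivalently $s_{x} G s_{x}^{-1} \subset G$, and that $G$ acts transitively on $X$.

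Transitivity is immediate from Definition~\ref{Def2.6}(1), since $\Aut(X,s)$ acting transitively on $X$ is exactly the defining property of a homogeneous quandle. The containment $s_{x} G s_{x}^{-1} \subset G$ is the single point worth isolating, and it is where the choice $G = \Aut(X,s)$ pays off: as recalled in Section~\ref{sec2}, conditions (Q2) and (Q3) guarantee $s_{x} \in \Aut(X,s) = G$, so conjugation by $s_{x}$ is an inner automorphism of the group $G$ and hence carries $G$ into itself. With both hypotheses in hand, Proposition~\ref{Prop3.3}(1) yields that $(G,G_{x},\sigma)$ is a quandle triplet, and Proposition~\ref{Prop3.3}(2), applicable because $G$ acts transitively, gives $Q(G,G_{x},\sigma) \simeq (X,s)$. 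This exhibits $(X,s)$ as obtained from a quandle triplet up to isomorphism, completing the converse.

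Since the theorem merely splices together the two preceding propositions, I expect no substantial obstacle. The only genuine observation is that taking $G$ to be the \emph{full} automorphism group automatically satisfies the conjugation hypothesis $s_{x} G s_{x}^{-1} \subset G$, precisely because $s_{x}$ itself belongs to $G$; any transitive subgroup $G \subset \Aut(X,s)$ that is stable under conjugation by $s_{x}$ would serve equally well, but the full group is the cleanest choice for an existence statement.
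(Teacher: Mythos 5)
Your proposal is correct and follows exactly the paper's intended route: the paper states Theorem~\ref{Thm3.4} as a direct corollary of Propositions~\ref{Prop3.2} and \ref{Prop3.3}, with the converse obtained by taking $G = \Aut(X,s)$ (the case the paper explicitly attributes to Joyce), and your verification that $s_{x} \in \Aut(X,s)$ makes conjugation well-defined while homogeneity gives transitivity is precisely the detail the paper leaves implicit.
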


\subsection{Splitting of quandle triplets}

In this subsection, we study the splitting of quandle triplets. Throughout this subsection, let $ G_{i} $ be a group, $ K_{i} $ be a subgroup of $ G_{i} $, and $ \sigma_{i} \in \Map(G_{i},G_{i}) $ ($ i =1,2 $).

\begin{Lem}\label{Lem3.5}
Assume that $ \sigma_{1} \times \sigma_{2} : G_{1} \times G_{2} \rightarrow G_{1} \times G_{2} $ is a group automorphism. Then $ \sigma_{1} $ and $ \sigma_{2} $ are also group automorphisms.
\end{Lem}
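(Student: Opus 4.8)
The plan is to verify, for each factor separately, the two defining properties of a group automorphism --- that $\sigma_{i}$ is a group homomorphism and that it is a bijection --- by extracting each from the corresponding property of $\sigma_{1} \times \sigma_{2}$ via the device of freezing one of the two coordinates. The key observation is that the two factors are ``decoupled'': the first coordinate of $(\sigma_{1} \times \sigma_{2})(g_{1},g_{2})$ depends only on $g_{1}$, so statements about $\sigma_{1}$ can be read off by holding $g_{2}$ fixed, and symmetrically for $\sigma_{2}$.

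First I would show each $\sigma_{i}$ is a homomorphism. Fix $g_{1},h_{1} \in G_{1}$ and choose any $g_{2},h_{2} \in G_{2}$. Applying the homomorphism property of $\sigma_{1} \times \sigma_{2}$ to the product $(g_{1},g_{2})(h_{1},h_{2}) = (g_{1}h_{1}, g_{2}h_{2})$ and comparing first coordinates gives $\sigma_{1}(g_{1}h_{1}) = \sigma_{1}(g_{1}) \sigma_{1}(h_{1})$; the argument for $\sigma_{2}$ is identical upon comparing second coordinates. Next, for injectivity of $\sigma_{1}$, suppose $\sigma_{1}(g_{1}) = \sigma_{1}(h_{1})$; fixing any $g_{2} \in G_{2}$ one finds $(\sigma_{1} \times \sigma_{2})(g_{1},g_{2}) = (\sigma_{1} \times \sigma_{2})(h_{1},g_{2})$, whence injectivity of $\sigma_{1} \times \sigma_{2}$ forces $g_{1} = h_{1}$. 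For surjectivity of $\sigma_{1}$, given $a_{1} \in G_{1}$ pick any $a_{2} \in G_{2}$ and use surjectivity of $\sigma_{1} \times \sigma_{2}$ to obtain a preimage of $(a_{1},a_{2})$, whose first coordinate maps onto $a_{1}$. Both arguments apply verbatim to $\sigma_{2}$.

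The only points requiring care are that $\sigma_{i}$ is \emph{a priori} merely an element of $\Map(G_{i},G_{i})$, so the homomorphism property must be derived rather than assumed, and that the coordinate-freezing arguments for injectivity and surjectivity presuppose that $G_{1}$ and $G_{2}$ are nonempty. The latter holds automatically, since every group contains its identity element, so a suitable element of the frozen factor is always available. I therefore expect no substantive obstacle; the content of the lemma is exactly this decoupling of the two factors.
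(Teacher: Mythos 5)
Your proof is correct and complete: the coordinate-freezing argument for the homomorphism property, injectivity, and surjectivity of each $\sigma_{i}$ is exactly the standard argument, and your care about nonemptiness (guaranteed by the identity elements) is the only subtle point. The paper itself omits the proof, stating only that it is ``an exercise of group theory,'' so your write-up is precisely the intended exercise and there is nothing further to compare against.
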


The proof of this lemma is an exercise of group theory. This describes a splitting of a group automorphism. Next we see a splitting of a quandle triplet.

\begin{Lem}\label{Lem3.6}
Assume that $ (G_{1} \times G_{2}, K_{1} \times K_{2}, \sigma_{1} \times \sigma_{2}) $ is a quandle triplet. Then $ (G_{1},K_{1},\sigma_{1}) $ and $ (G_{2},K_{2},\sigma_{2}) $ are also quandle triplets.
\end{Lem}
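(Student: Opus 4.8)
The plan is to verify directly the two conditions in the definition of a quandle triplet (Definition~\ref{Def3.1}) for each of $ (G_{1},K_{1},\sigma_{1}) $ and $ (G_{2},K_{2},\sigma_{2}) $: namely, that $ \sigma_{i} $ is a group automorphism of $ G_{i} $, and that $ K_{i} \subset \Fix(\sigma_{i},G_{i}) $. The requirement that $ K_{i} $ be a subgroup of $ G_{i} $ is already part of the standing hypotheses of the subsection, so nothing need be checked there.

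The automorphism condition is immediate. Since $ (G_{1} \times G_{2}, K_{1} \times K_{2}, \sigma_{1} \times \sigma_{2}) $ is assumed to be a quandle triplet, by definition $ \sigma_{1} \times \sigma_{2} \in \Aut(G_{1} \times G_{2}) $. Lemma~\ref{Lem3.5} then yields at once that $ \sigma_{1} $ and $ \sigma_{2} $ are group automorphisms.

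For the fixed-point condition, I would first record the elementary product decomposition
\begin{align*}
\Fix(\sigma_{1} \times \sigma_{2}, G_{1} \times G_{2}) = \Fix(\sigma_{1},G_{1}) \times \Fix(\sigma_{2},G_{2}),
\end{align*}
which follows by unwinding the definition componentwise, since $ (\sigma_{1} \times \sigma_{2})(g_{1},g_{2}) = (g_{1},g_{2}) $ is equivalent to $ \sigma_{1}(g_{1}) = g_{1} $ and $ \sigma_{2}(g_{2}) = g_{2} $. The quandle triplet hypothesis gives $ K_{1} \times K_{2} \subset \Fix(\sigma_{1},G_{1}) \times \Fix(\sigma_{2},G_{2}) $. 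To pass from this product inclusion down to each factor, I would take an arbitrary $ k_{1} \in K_{1} $ and pair it with the identity element $ e_{2} $ of $ G_{2} $; since $ K_{2} $ is a subgroup it contains $ e_{2} $, so $ (k_{1},e_{2}) \in K_{1} \times K_{2} $. The inclusion then forces $ (k_{1},e_{2}) \in \Fix(\sigma_{1},G_{1}) \times \Fix(\sigma_{2},G_{2}) $, whence $ k_{1} \in \Fix(\sigma_{1},G_{1}) $. The symmetric argument, pairing $ k_{2} \in K_{2} $ with $ e_{1} $, gives $ K_{2} \subset \Fix(\sigma_{2},G_{2}) $.

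This argument is entirely routine, so I do not expect a genuine obstacle. The only step that requires a moment's thought is the projection from the product inclusion to the factor inclusions: one cannot merely ``project'' a subset of a product onto its factors and preserve the containment, and the point is that inserting the identity element of the complementary factor (available precisely because the $ K_{i} $ are subgroups) lets one realize each $ k_{i} $ as a component of an element actually lying in $ K_{1} \times K_{2} $.
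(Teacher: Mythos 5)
Your proof is correct and follows essentially the same route as the paper: Lemma~\ref{Lem3.5} for the automorphism condition, then the componentwise identity $\Fix(\sigma_{1} \times \sigma_{2}, G_{1} \times G_{2}) = \Fix(\sigma_{1},G_{1}) \times \Fix(\sigma_{2},G_{2})$ combined with the hypothesis $K_{1} \times K_{2} \subset \Fix(\sigma_{1} \times \sigma_{2}, G_{1} \times G_{2})$. The only difference is that you spell out the final projection step (pairing $k_{1}$ with $e_{2} \in K_{2}$), which the paper leaves implicit; this is a worthwhile clarification but not a different argument.
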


\begin{proof}
One knows that $ \sigma_{1} $ and $ \sigma_{2} $ are group automorphisms by Lemma~\ref{Lem3.5}. Therefore, we have only to show that
\begin{align*}
K_{1} \subset \Fix(\sigma_{1}, G_{1}), \quad K_{2} \subset \Fix(\sigma_{2}, G_{2}).
\end{align*}
Since $ (G_{1} \times G_{2}, K_{1} \times K_{2}, \sigma_{1} \times \sigma_{2}) $ is a quandle triplet, one has
\begin{align*}
K_{1} \times K_{2} \subset \Fix(\sigma_{1} \times \sigma_{2},G_{1} \times G_{2}).
\end{align*}
Furthermore, one can directly see that 
\begin{align*}
\Fix(\sigma_{1} \times \sigma_{2},G_{1} \times G_{2}) = \Fix(\sigma_{1},G_{1}) \times \Fix(\sigma_{2},G_{2}),
\end{align*}
which completes the proof.
\end{proof}

Finally in this subsection, we describe a splitting of the quandle constructed from a quandle triplet.

\begin{Prop}\label{Prop3.7}
Assume that $ (G_{1} \times G_{2}, K_{1} \times K_{2}, \sigma_{1} \times \sigma_{2}) $ is a quandle triplet. Then we have
\begin{align*}
Q(G_{1} \times G_{2}, K_{1} \times K_{2}, \sigma_{1} \times \sigma_{2}) \simeq Q(G_{1},K_{1},\sigma_{1}) \times Q(G_{2},K_{2},\sigma_{2}).
\end{align*}
\end{Prop}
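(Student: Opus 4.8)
The plan is to exhibit the natural set-theoretic identification of the underlying coset spaces and to check that it intertwines the two quandle structures. First I would note that, by Lemma~\ref{Lem3.6}, both $(G_1,K_1,\sigma_1)$ and $(G_2,K_2,\sigma_2)$ are themselves quandle triplets, so that the right-hand side is defined. Writing $Q(G_i,K_i,\sigma_i) = (G_i/K_i, s^{(i)})$ and $Q(G_1 \times G_2, K_1 \times K_2, \sigma_1 \times \sigma_2) = ((G_1 \times G_2)/(K_1 \times K_2), s)$, I would define
\begin{align*}
f : (G_1 \times G_2)/(K_1 \times K_2) \to (G_1/K_1) \times (G_2/K_2) : [(g_1,g_2)] \mapsto ([g_1],[g_2]).
\end{align*}

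Next I would verify that $f$ is a well-defined bijection. Well-definedness follows because two representatives of the same coset differ by an element of $K_1 \times K_2$, whose components lie in $K_1$ and $K_2$ respectively; the map $([g_1],[g_2]) \mapsto [(g_1,g_2)]$ is a well-defined two-sided inverse for the same reason. This step is routine bookkeeping of coset representatives.

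The substance of the proof is to check that $f$ is a quandle homomorphism, that is, $f \circ s_{[(g_1,g_2)]} = s'_{f([(g_1,g_2)])} \circ f$, where $s'$ denotes the product quandle structure $s^{(1)} \times s^{(2)}$ of Lemma~\ref{Lem2.8}. The key observation is that the defining formula of Proposition~\ref{Prop3.2} is componentwise once $G$, $K$, and $\sigma$ all split as products: evaluating at $[(h_1,h_2)]$ and using $(\sigma_1 \times \sigma_2)(a_1,a_2) = (\sigma_1(a_1),\sigma_2(a_2))$ together with the componentwise group operations, one computes
\begin{align*}
s_{[(g_1,g_2)]}([(h_1,h_2)]) = [(g_1 \sigma_1(g_1^{-1}h_1),\ g_2 \sigma_2(g_2^{-1}h_2))].
\end{align*}
Applying $f$ to this and comparing with the factorwise formula $s^{(i)}_{[g_i]}([h_i]) = [g_i \sigma_i(g_i^{-1}h_i)]$ shows that both sides of the homomorphism identity equal $([g_1\sigma_1(g_1^{-1}h_1)],[g_2\sigma_2(g_2^{-1}h_2)])$, so $f$ is an isomorphism.

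I do not expect a genuine obstacle here: everything reduces to the fact that the coset identification $(G_1 \times G_2)/(K_1 \times K_2) \cong (G_1/K_1) \times (G_2/K_2)$ is compatible with the componentwise nature of the construction in Proposition~\ref{Prop3.2}. The only point requiring care is keeping the bookkeeping of representatives straight, so that both the well-definedness of $f$ and the coincidence of the two quandle structures are transparent.
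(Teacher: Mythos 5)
Your proposal is correct and follows essentially the same route as the paper: the same map $f : [(g_1,g_2)] \mapsto ([g_1],[g_2])$, the same appeal to Lemma~\ref{Lem3.6} to make sense of the right-hand side, and the same componentwise computation showing both sides of the homomorphism identity equal $([g_1\sigma_1(g_1^{-1}h_1)],[g_2\sigma_2(g_2^{-1}h_2)])$. The only difference is that you spell out the well-definedness and bijectivity of $f$, which the paper dismisses as clear.
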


\begin{proof}
Note that $ (G_{1},K_{1},\sigma_{1}) $ and $ (G_{2},K_{2},\sigma_{2}) $ are quandle triplets by Lemma~\ref{Lem3.6}. Define the quandle structures $ s, s^{\prime} $, and $ s^{\prime\prime} $ as follows:
\begin{align*}
((G_{1} \times G_{2})/(K_{1} \times K_{2}),s) &:= Q(G_{1} \times G_{2}, K_{1} \times K_{2}, \sigma_{1} \times \sigma_{2}),\\
(G_{1}/K_{1},s^{\prime}) &:= Q(G_{1},K_{1},\sigma_{1}),\\
(G_{2}/K_{2},s^{\prime\prime}) &:= Q(G_{2},K_{2},\sigma_{2}).
\end{align*}
We show that the following $ f $ is a quandle isomorphism:
\begin{align*}
f: (G_{1} \times G_{2})/(K_{1} \times K_{2}) \rightarrow G_{1}/K_{1} \times G_{2}/K_{2} : [(g_{1}, g_{2})] \mapsto ([g_{1}], [g_{2}]).
\end{align*}
It is clear that $ f $ is well-defined and bijective. We show that $ f $ is a homomorphism. Take any $ [(g_{1},g_{2})], [(h_{1},h_{2})] \in (G_{1} \times G_{2})/(K_{1} \times K_{2}) $. Then we have
\begin{align*}
f \circ s_{[(g_{1},g_{2})]}([(h_{1},h_{2})]) &= f ([(g_{1},g_{2}) (\sigma_{1} \times \sigma_{2}) ((g_{1},g_{2})^{-1}(h_{1},h_{2}))])\\ 
&= f([(g_{1}\sigma_{1}(g^{-1}_{1}h_{1}), g_{2}\sigma_{2}(g^{-1}_{2}h_{2}))])\\
&= ([g_{1}\sigma_{1}(g^{-1}_{1}h_{1})], [g_{2}\sigma_{2}(g^{-1}_{2}h_{2})]),\\
(s^{\prime} \times s^{\prime\prime})_{f([(g_{1},g_{2})])} \circ f([(h_{1},h_{2})]) &=  (s^{\prime} \times s^{\prime\prime})_{([g_{1}],[g_{2}])} ([h_{1}],[h_{2}])\\
&= ([g_{1}\sigma_{1}(g^{-1}_{1}h_{1})], [g_{2}\sigma_{2}(g^{-1}_{2}h_{2})]).
\end{align*}
This show that $ f $ is a homomorphism, which completes the proof.
\end{proof}

\subsection{The quandle triplets of the direct products of dihedral quandles}\label{subse3.3}

In this subsection, we study the quandle triplets which correspond to the direct products of dihedral quandles.

\begin{Lem}\label{Lem3.8}
$ (\Z_{n}, \{0\}, -\id_{\Z_{n}}) $ is a quandle triplet, and $ Q(\Z_{n},\{0\},-\id_{\Z_{n}}) $ is isomorphic to the dihedral quandle $ R_{n} $ of order $ n $.
\end{Lem}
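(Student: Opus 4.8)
The plan is to verify the two assertions of Lemma~\ref{Lem3.8} in turn, both of which reduce to routine checks once the right identifications are made. First I would confirm that $(\Z_n, \{0\}, -\id_{\Z_n})$ is a quandle triplet by appealing directly to Definition~\ref{Def3.1}. The map $-\id_{\Z_n} : x \mapsto -x$ is a group automorphism of the abelian group $\Z_n$, and the trivial subgroup $\{0\}$ is obviously fixed by it, since $-\id_{\Z_n}(0) = 0$. Hence $\{0\} \subset \Fix(-\id_{\Z_n}, \Z_n)$, which is exactly the condition required.

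For the isomorphism claim, I would first compute the quandle structure $s$ of $Q(\Z_n, \{0\}, -\id_{\Z_n})$ explicitly using the formula in Proposition~\ref{Prop3.2}. Since $K = \{0\}$, the quotient $\Z_n / \{0\}$ is canonically identified with $\Z_n$ itself, and each coset $[g]$ is simply the element $g$. Under this identification the defining formula $s_{[g]}([h]) = [g\sigma(g^{-1}h)]$ becomes, with $\sigma = -\id_{\Z_n}$ and the group operation written additively,
\begin{align*}
s_g(h) = g + \sigma(h - g) = g - (h - g) = 2g - h.
\end{align*}
This is precisely the binary operator $s_g(h) = 2g - h$ defining the dihedral quandle $R_n$ in Example~\ref{Ex2.2}(2).

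It then remains to exhibit the isomorphism. The natural candidate is the identity map $f : \Z_n / \{0\} \to \Z_n$ coming from the identification above, that is, $f([g]) = g$. I would check that $f$ is well-defined and bijective (immediate, since $K = \{0\}$) and that it is a homomorphism in the sense of Definition~\ref{Def2.3}: for all $g, h$ one needs $f(s_g([h])) = s_{f([g])}(f([h]))$, which by the computation above is the tautology $2g - h = 2g - h$. This establishes $Q(\Z_n, \{0\}, -\id_{\Z_n}) \simeq R_n$.

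I do not anticipate any serious obstacle here; the lemma is essentially a bookkeeping verification that the abstract construction of Proposition~\ref{Prop3.2} specializes correctly. The only point requiring mild care is making the identification $\Z_n / \{0\} \cong \Z_n$ explicit and consistent, so that the translation between the cosetic formula and the elementary dihedral formula $2g - h$ is unambiguous; once that identification is fixed, every step is a one-line check.
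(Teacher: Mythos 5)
Your proposal is correct and follows essentially the same route as the paper: both identify $\Z_n/\{0\}$ with $\Z_n$, compute the operation $s_{[g]}([h]) = [g\sigma(g^{-1}h)]$ additively to get $2g-h$, and observe that the identification map is then tautologically a quandle isomorphism onto $R_n$. The only cosmetic difference is that the paper writes the isomorphism in the direction $R_n \to Q(\Z_n,\{0\},-\id_{\Z_n})$, $x \mapsto [x]$, whereas you use its inverse.
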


\begin{proof}
One can easily see that $ (\Z_{n}, \{0\}, -\id_{\Z_{n}}) $ is a quandle triplet. Denote by $ (\Z_{n},s) := R_{n} $ and $ (\Z_{n}/\{0\},s^{\prime}) := Q(\Z_{n},\{0\},-\id_{\Z_{n}}) $. Consider the map
\begin{align*}
f : \Z_{n} \rightarrow \Z_{n}/\{0\} : x \rightarrow [x].
\end{align*}
We show that $ f $ is an isomorphism. It is clear that $ f $ is bijective. In order to show that $ f $ is a homomorphism, take any $ x,y \in \Z_{n} $. Since $ \sigma = -\id_{\Z_{n}} $, we have
\begin{align*}
f \circ s_{x}(y) &= f (2x-y) = [2x-y],\\
s^{\prime}_{f(x)} \circ f(y) &= s^{\prime}_{[x]}([y]) = [x \sigma(-x+y)] = [x-(-x+y)] = [2x-y].
\end{align*}
This shows that $ f $ is a homomorphism, which completes the proof.
\end{proof}

\begin{Prop}\label{Prop3.9}
Let $ q_{1}, \ldots, q_{n} \in \Z_{>0} $. Then $ (\Z_{q_{1}} \times \cdots \times \Z_{q_{n}}, \{ 0 \}, -\id_{\Z_{q_{1}} \times \cdots \times \Z_{q_{n}}}) $ is a quandle triplet, and we have
\begin{align*}
Q(\Z_{q_{1}} \times \cdots \times \Z_{q_{n}}, \{ 0 \}, -\id_{\Z_{q_{1}} \times \cdots \times \Z_{q_{n}}}) \simeq R_{q_{1}} \times \cdots \times R_{q_{n}}.
\end{align*}
\end{Prop}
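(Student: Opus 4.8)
The plan is to verify the quandle-triplet condition directly, and then obtain the isomorphism by induction on $n$, using the splitting result of Proposition~\ref{Prop3.7} together with the single-factor case established in Lemma~\ref{Lem3.8}.

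For the quandle-triplet assertion, write $G := \Z_{q_1} \times \cdots \times \Z_{q_n}$. Since $G$ is abelian, $-\id_G$ is a group automorphism, and the trivial subgroup $\{0\}$ is obviously fixed by it, so $\{0\} \subset \Fix(-\id_G, G)$. By Definition~\ref{Def3.1}, $(G, \{0\}, -\id_G)$ is a quandle triplet.

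For the isomorphism I would induct on $n$. The base case $n=1$ is exactly Lemma~\ref{Lem3.8}. For the inductive step, put $G_1 := \Z_{q_1}$ and $G_2 := \Z_{q_2} \times \cdots \times \Z_{q_n}$, so that $G = G_1 \times G_2$. The key observation is the pair of identifications
\begin{align*}
\{0\} = \{0\} \times \{0\}, \qquad -\id_{G_1 \times G_2} = (-\id_{G_1}) \times (-\id_{G_2}),
\end{align*}
the second holding because negation in a direct product is performed componentwise. Under these, $(G, \{0\}, -\id_G)$ is precisely the product triplet to which Proposition~\ref{Prop3.7} applies, giving
\begin{align*}
Q(G, \{0\}, -\id_G) \simeq Q(\Z_{q_1}, \{0\}, -\id_{\Z_{q_1}}) \times Q(G_2, \{0\}, -\id_{G_2}).
\end{align*}
Lemma~\ref{Lem3.8} identifies the first factor with $R_{q_1}$, and the inductive hypothesis identifies the second with $R_{q_2} \times \cdots \times R_{q_n}$, completing the argument.

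There is no genuine obstacle here; the only point requiring care is confirming the decomposition $-\id_{G_1 \times G_2} = (-\id_{G_1}) \times (-\id_{G_2})$ so that Proposition~\ref{Prop3.7} is literally applicable, after which everything is formal. As an alternative avoiding induction, one could instead unwind the structure of $Q(G, \{0\}, -\id_G)$ directly: identifying $G/\{0\}$ with $G$ yields $s_g(h) = 2g - h$ with componentwise operations, which is manifestly the componentwise structure of $R_{q_1} \times \cdots \times R_{q_n}$, so the natural identification is itself an isomorphism.
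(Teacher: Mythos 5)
Your proof is correct and follows essentially the same route as the paper: verify the triplet condition directly, decompose $-\id$ componentwise, and combine Proposition~\ref{Prop3.7} with Lemma~\ref{Lem3.8}. The only difference is cosmetic --- you make the induction on $n$ explicit, whereas the paper applies the two-factor splitting of Proposition~\ref{Prop3.7} to all $n$ factors at once (implicitly iterating it), so if anything your version is slightly more careful.
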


\begin{proof}
One can easily see the first assertion. Furthermore, we know
\begin{align*}
-\id_{\Z_{q_{1}} \times \cdots \times \Z_{q_{n}}} = -\id_{\Z_{q_{1}}} \times \cdots \times -\id_{\Z_{q_{n}}}.
\end{align*}
Hence, from Proposition~\ref{Prop3.7} and Lemma~\ref{Lem3.8}, we have
\begin{align*}
&Q(\Z_{q_{1}} \times \cdots \times \Z_{q_{n}}, \{ 0 \}, -\id_{\Z_{q_{1}}} \times \cdots \times -\id_{\Z_{q_{n}}})\\
&\simeq Q(\Z_{q_{1}}, \{ 0 \}, -\id_{\Z_{q_{1}}}) \times \cdots \times Q(\Z_{q_{n}}, \{ 0 \}, -\id_{\Z_{q_{n}}})\\
&\simeq R_{q_{1}} \times \cdots \times R_{q_{n}}.
\end{align*}
This completes the proof.
\end{proof}

By this proposition, one can observe the following, which is relevant to our main theorem.

\begin{Rem}
Suppose that $ m $ and $ n $ are coprime to each other. Then one has a group isomorphism $ \Z_{mn} \simeq \Z_{m} \times \Z_{n} $. Hence Proposition~\ref{Prop3.9} yields that
\begin{align*}
R_{mn} \simeq R_{m} \times R_{n}.
\end{align*}
Therefore, every dihedral quandle can be expressed as a direct product of dihedral quandles with prime power cardinalities.
\end{Rem}

\section{Flat quandles}\label{sec4}

In this section, we define two new notions. One is the group of displacements of a quandle, and another is a flat quandle.

\subsection{The groups of displacements of quandles}

In this subsection, we define the groups of displacements of quandles, and study their properties.

\begin{Def}\label{Def4.1}
Let $ (X,s) $ be a quandle. The group generated by
\begin{align*}
\{ s_{x} \circ s_{y} \mid x,y \in X \}
\end{align*}
is called the \textit{group of displacements} of $ (X,s) $, and denoted by $ G^{0}(X,s) $.
\end{Def}

Note that, for any $ x,y \in X $, one has
\begin{align*}
s_{x} \circ s^{-1}_{y} = s_{x} \circ s_{x} \circ s^{-1}_{x} \circ s^{-1}_{y} = (s_{x} \circ s_{x}) \circ (s_{y} \circ s_{x})^{-1} \in G^{0}(X,s).
\end{align*}
By definition, one knows $ G^{0}(X,s) \subset \Inn(X,s) $. Therefore, if $ G^{0}(X,s) $ acts transitively on $ X $, then $ (X,s) $ is connected. In fact, the converse also holds.

\begin{Lem}\label{Lem4.2}
A quandle $ (X,s) $ is connected if and only if $ G^{0}(X,s) $ acts transitively on $ X $.
\end{Lem}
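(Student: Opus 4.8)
The plan is to prove the two implications separately; the forward implication is essentially free, so the content lies in the converse. Since $ G^{0}(X,s) \subset \Inn(X,s) $, transitivity of $ G^{0}(X,s) $ immediately forces transitivity of $ \Inn(X,s) $, that is, connectedness; this is exactly the observation already recorded before the statement. So I would dispatch the ``if'' direction in one line and concentrate on the ``only if'' direction.

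For the converse, assume $ (X,s) $ is connected and fix $ x, y \in X $; the goal is to produce an element of $ G^{0}(X,s) $ carrying $ x $ to $ y $. Connectedness gives a word $ w = s_{x_{1}}^{\varepsilon_{1}} \circ \cdots \circ s_{x_{n}}^{\varepsilon_{n}} $ in the generators of $ \Inn(X,s) $ with $ w(x) = y $. The key preliminary step is to show that every word of even length in the generators $ s_{a}^{\pm 1} $ lies in $ G^{0}(X,s) $. I would prove this by grouping the factors into consecutive pairs and checking that each pair $ s_{a}^{\varepsilon} \circ s_{b}^{\delta} $ belongs to $ G^{0}(X,s) $: the cases $ s_{a} \circ s_{b} $ and $ s_{a} \circ s_{b}^{-1} $ are exactly a defining generator and the element displayed just before the statement, while the remaining two cases follow from $ s_{a}^{-2} = (s_{a} \circ s_{a})^{-1} \in G^{0}(X,s) $ together with the identity $ s_{a}^{-1} \circ s_{b}^{\delta} = s_{a}^{-2} \circ (s_{a} \circ s_{b}^{\delta}) $.

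It then remains to handle the parity of $ n $. If $ n $ is even, then $ w \in G^{0}(X,s) $ by the preliminary step and $ w(x) = y $, so we are done. If $ n $ is odd, I would exploit axiom (Q1): since $ s_{x}(x) = x $, the longer word $ w \circ s_{x} $ still satisfies $ (w \circ s_{x})(x) = w(x) = y $, but now has even length $ n+1 $, hence lies in $ G^{0}(X,s) $. Either way $ x $ and $ y $ lie in the same $ G^{0}(X,s) $-orbit, which proves that $ G^{0}(X,s) $ acts transitively.

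The only mildly delicate point is the odd-length case; everything else is bookkeeping. The trick of appending the stabilizing symmetry $ s_{x} $ to correct the parity, which works precisely because $ s_{x} $ fixes $ x $ by (Q1), is the step I expect to be the crux, and it neatly sidesteps any need to verify that a parity homomorphism $ \Inn(X,s) \to \Z_{2} $ is well defined.
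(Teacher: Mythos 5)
Your proof is correct and takes essentially the same approach as the paper: the paper's own argument also fixes the parity problem by composing with $ s_{x} $, which fixes $ x $ by (Q1), so that $ f \circ s_{x} \in G^{0}(X,s) $ still carries $ x $ to $ y $. The only difference is that you spell out the preliminary fact that every even-length word in the $ s_{a}^{\pm 1} $ lies in $ G^{0}(X,s) $, a point the paper leaves implicit when it asserts that $ f \notin G^{0}(X,s) $ forces $ f \circ s_{x} \in G^{0}(X,s) $.
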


\begin{proof}
We have only to show the ``only if'' part. Assume that $ (X,s) $ is connected. Take any $ x,y \in X $. Then there exists $ f \in \Inn(X,s) $ such that $ f(x) = y $. If $ f \notin G^{0}(X,s) $, then we have
\begin{align*}
f \circ s_{x} \in G^{0}(X,s),\ f \circ s_{x} (x) = f (x) = y.
\end{align*}
This shows that $ G^{0}(X,s) $ acts transitively on $ X $.
\end{proof}

Finally in this subsection, we study the group of displacements of a direct product quandle. A quandle $ (X,s) $ is said to be \textit{involutive} if $ s_{x}^{2} = \id_{X} $ for any $ x \in X $.

\begin{Prop}\label{Prop4.3}
Let $ (X,s^{X}) $ and $ (Y,s^{Y}) $ be quandles. Then one has
\begin{align*}
G^{0}((X,s^{X}) \times (Y,s^{Y})) \subset G^{0}(X,s^{X}) \times G^{0}(Y,s^{Y}).
\end{align*}
Moreover, the equality holds if both of $ (X,s^{X}) $ and $ (Y,s^{Y}) $ are involutive.
\end{Prop}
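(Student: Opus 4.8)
The plan is to prove the two assertions separately, mirroring the structure of Proposition~\ref{Prop2.9}. For the inclusion, I would first recall that the group of displacements $G^{0}((X,s^{X}) \times (Y,s^{Y}))$ is generated by the elements of the form $(s^{X} \times s^{Y})_{(x,y)} \circ (s^{X} \times s^{Y})_{(x',y')}$ for $(x,y),(x',y') \in X \times Y$. Using the identity $(s^{X} \times s^{Y})_{(x,y)} = s^{X}_{x} \times s^{Y}_{y}$ already established in the proof of Proposition~\ref{Prop2.9}(1), together with the compatibility $(f \times g) \circ (f' \times g') = (f \circ f') \times (g \circ g')$ of the product operation with composition, each generator factors as
\begin{align*}
(s^{X}_{x} \circ s^{X}_{x'}) \times (s^{Y}_{y} \circ s^{Y}_{y'}) \in G^{0}(X,s^{X}) \times G^{0}(Y,s^{Y}).
\end{align*}
Since $G^{0}(X,s^{X}) \times G^{0}(Y,s^{Y})$ is a subgroup of $\Map(X,X) \times \Map(Y,Y)$ that contains all the generators, it contains the group they generate, giving the inclusion.

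For the reverse inclusion under the involutive hypothesis, I would show that every generator of $G^{0}(X,s^{X}) \times G^{0}(Y,s^{Y})$ lies in the left-hand group. It suffices to realize elements of the two ``extreme'' forms $(s^{X}_{x} \circ s^{X}_{x'}) \times \id_{Y}$ and $\id_{X} \times (s^{Y}_{y} \circ s^{Y}_{y'})$ inside $G^{0}((X,s^{X}) \times (Y,s^{Y}))$, since the full direct product group is generated by such pieces. The key observation is that, when $(Y,s^{Y})$ is involutive, one has $s^{Y}_{y} \circ s^{Y}_{y} = \id_{Y}$, so that
\begin{align*}
(s^{X} \times s^{Y})_{(x,y)} \circ (s^{X} \times s^{Y})_{(x',y)} = (s^{X}_{x} \circ s^{X}_{x'}) \times (s^{Y}_{y} \circ s^{Y}_{y}) = (s^{X}_{x} \circ s^{X}_{x'}) \times \id_{Y},
\end{align*}
which exhibits $(s^{X}_{x} \circ s^{X}_{x'}) \times \id_{Y}$ as a product of two generators of $G^{0}((X,s^{X}) \times (Y,s^{Y}))$. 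Symmetrically, using that $(X,s^{X})$ is involutive, one obtains $\id_{X} \times (s^{Y}_{y} \circ s^{Y}_{y'}) \in G^{0}((X,s^{X}) \times (Y,s^{Y}))$.

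Finally I would assemble these pieces: an arbitrary element of $G^{0}(X,s^{X}) \times G^{0}(Y,s^{Y})$ has the form $a \times b$ with $a \in G^{0}(X,s^{X})$ and $b \in G^{0}(Y,s^{Y})$, and since $a \times b = (a \times \id_{Y}) \circ (\id_{X} \times b)$, it is a product of elements of the two extreme types shown above to belong to $G^{0}((X,s^{X}) \times (Y,s^{Y}))$. Hence the reverse inclusion holds and equality follows. The only mild subtlety — the part I would flag as the main point rather than a true obstacle — is recognizing that the involutive hypothesis is exactly what is needed to decouple the two factors: without it, a generator $s^{X}_{x} \circ s^{X}_{x'}$ in the $X$-factor cannot be produced in isolation, because any displacement coming from $X \times Y$ that uses the second coordinate $y$ drags along a nontrivial $s^{Y}_{y} \circ s^{Y}_{y}$ contribution in the $Y$-factor. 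I would make sure to state clearly that the factorization $a \times b = (a \times \id_{Y}) \circ (\id_{X} \times b)$ reduces the problem to the two extreme cases, and that each such case is handled by the involutivity cancellation above.
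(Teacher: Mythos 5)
Your proof is correct and follows essentially the same route as the paper: the inclusion via factoring the generators $(s^{X} \times s^{Y})_{(x,y)} = s^{X}_{x} \times s^{Y}_{y}$ (as in Proposition~\ref{Prop2.9}~(1)), and the equality by realizing the generators $(s^{X}_{x_{1}} \circ s^{X}_{x_{2}}) \times \id_{Y}$ and $\id_{X} \times (s^{Y}_{y_{1}} \circ s^{Y}_{y_{2}})$ inside $G^{0}((X,s^{X}) \times (Y,s^{Y}))$ using the involutivity cancellation $(s^{Y}_{y})^{2} = \id_{Y}$. The only cosmetic slip is calling $\Map(X,X) \times \Map(Y,Y)$ a group containing $G^{0}(X,s^{X}) \times G^{0}(Y,s^{Y})$; you should instead view the latter inside the group of bijections, but this does not affect the argument.
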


\begin{proof}
In a similar way as Proposition~\ref{Prop2.9} (1), one can show the first assertion. We show the second assertion. Assume that both of $ (X,s^{X}) $ and $ (Y,s^{Y}) $ are involutive. Note that $ G^{0}(X,s^{X}) \times G^{0}(Y,s^{Y}) $ is generated by
\begin{align*}
\{ (s^{X}_{x_{1}} \circ s^{X}_{x_{2}}) \times \id_{Y} \mid x_{1}, x_{2} \in X \} \cup \{ \id_{X} \times (s^{Y}_{y_{1}} \circ s^{Y}_{y_{2}}) \mid y_{1}, y_{2} \in Y \}.
\end{align*}
Take any $ x_{1}, x_{2} \in X $, and $ y \in Y $. Since $ (s^{Y}_{y})^{2} = \id_{Y} $, one has
\begin{align*}
(s^{X}_{x_{1}} \circ s^{X}_{x_{2}}) \times \id_{Y} &= (s^{X} \times s^{Y})_{(x_{1},y)} \circ (s^{X} \times s^{Y})_{(x_{2},y)}\\
&\in G^{0}((X,s^{X}) \times (Y,s^{Y})).
\end{align*}
Similarly, for any $ y_{1}, y_{2} \in Y $, one has
\begin{align*}
\id_{X} \times (s^{Y}_{y_{1}} \circ s^{Y}_{y_{2}}) \in G^{0}((X,s^{X}) \times (Y,s^{Y})).
\end{align*}
This proves the second assertion.
\end{proof}

\subsection{Definition of flat quandles}

In this subsection, we define flat quandles. We refer to \cite{Loos} for a general theory of Riemannian symmetric spaces.

\begin{Thm}[\cite{Loos}]\label{Thm4.4}
Let $ (M,g) $ be a connected Riemannian symmetric space, and $ s_{x} $ be the symmetry at $ x $ in $ M $. Then the following conditions are equivalent$:$
\begin{itemize}
\item[(1)]
$ (M,g) $ is flat (that is, the Riemannian curvature vanishes identically).
\item[(2)]
The group generated by $ \{ s_{x} \circ s_{y} \mid x,y \in M \} $ is commutative.
\end{itemize}
\end{Thm}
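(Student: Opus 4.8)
The plan is to descend to the Lie algebra of the isometry group, where both conditions become transparent bracket conditions. Write $ M = G/K $ with $ G $ the identity component of the isometry group and $ K $ the isotropy subgroup at a base point $ o $. The symmetry $ s_{o} $ induces an involutive automorphism $ \sigma $ of $ G $ with $ K \subset \Fix(\sigma,G) $, and differentiating yields the Cartan decomposition $ \LG = \LK \oplus \LP $ into the $ (+1) $- and $ (-1) $-eigenspaces of $ d\sigma $. Here $ \LK $ is the Lie algebra of $ K $, the subspace $ \LP $ is identified with the tangent space $ T_{o}M $, and the bracket relations $ [\LK,\LK] \subset \LK $, $ [\LK,\LP] \subset \LP $, $ [\LP,\LP] \subset \LK $ hold. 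By homogeneity it suffices to work at $ o $ throughout.

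Next I would translate condition (1). For a Riemannian symmetric space the curvature tensor at $ o $ is $ R(X,Y)Z = -[[X,Y],Z] $ for $ X,Y,Z \in \LP $; hence $ (M,g) $ is flat if and only if $ [[\LP,\LP],\LP] = 0 $. For condition (2), the group generated by $ \{ s_{x} \circ s_{y} \} $ is precisely the transvection group $ G^{0} $, whose Lie algebra is the transvection algebra $ \LG^{0} = [\LP,\LP] \oplus \LP $. Since $ G^{0} $ is connected, it is commutative if and only if $ \LG^{0} $ is abelian. Expanding $ [\LG^{0},\LG^{0}] $ with the relations above, one checks that $ \LG^{0} $ is abelian if and only if $ [\LP,\LP] = 0 $: vanishing of $ [\LP,\LP] $ makes $ \LG^{0} = \LP $ abelian, and conversely an abelian $ \LG^{0} $ forces $ [\LP,\LP] = 0 $ at once.

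Thus the theorem reduces to the purely algebraic equivalence $ [[\LP,\LP],\LP] = 0 \iff [\LP,\LP] = 0 $. One implication is immediate and gives $ (2) \Rightarrow (1) $. The substantive direction is $ (1) \Rightarrow (2) $, where I must deduce $ [\LP,\LP] = 0 $ from $ [[\LP,\LP],\LP] = 0 $, and this is where the main obstacle lies. The hypothesis says exactly that $ [\LP,\LP] $ acts trivially on $ \LP $ under the bracket, i.e.\ that the linear isotropy representation annihilates $ [\LP,\LP] $. The decisive point to exploit is that for the transvection group this isotropy representation is faithful: the set $ \{ Z \in [\LP,\LP] \mid [Z,\LP] = 0 \} $ is an ideal of $ \LG^{0} $ that acts trivially on $ M $, and hence vanishes by the effectiveness of the $ G^{0} $-action. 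Establishing this faithfulness --- equivalently, that $ \LG^{0} $ contains no nonzero ideal inside $ [\LP,\LP] $ --- is the crux of the argument; granting it, $ [[\LP,\LP],\LP] = 0 $ forces $ [\LP,\LP] = 0 $, and the equivalence, hence the theorem, follows.
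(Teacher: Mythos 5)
This theorem is quoted in the paper from \cite{Loos} and is given no proof there --- it serves only to motivate the definition of flat quandles --- so there is no in-paper argument to compare yours against; the relevant comparison is with the standard proof in the literature, and that is essentially what you have reproduced. Your reduction is the classical one: the Cartan decomposition $ \LG = \LK \oplus \LP $, the curvature formula $ R(X,Y)Z = -[[X,Y],Z] $ on $ \LP \cong T_{o}M $, the identification of the group generated by $ \{ s_{x} \circ s_{y} \} $ with the transvection group, whose Lie algebra is $ [\LP,\LP] \oplus \LP $ and which is connected (each $ s_{x} \circ s_{y} $ is a transvection along a geodesic, hence lies on a one-parameter family through the identity), so that conditions (1) and (2) become $ [[\LP,\LP],\LP] = 0 $ and $ [\LP,\LP] = 0 $ respectively. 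Your treatment of the crux, $ (1) \Rightarrow (2) $, is also the standard and correct one: under (1), $ [\LP,\LP] $ is an ideal of the transvection algebra contained in the isotropy algebra and annihilating $ \LP $; the corresponding connected subgroup is normal in $ G^{0} $, lies in the isotropy group at $ o $, hence by transitivity of $ G^{0} $ and normality fixes every point of $ M $, and is therefore trivial since isometries act effectively. (Equivalently, and slightly more cheaply: each $ Z $ in this ideal is a Killing field with $ Z(o) = 0 $ and vanishing covariant derivative at $ o $, hence $ Z \equiv 0 $ on the connected manifold $ M $.) So your outline is a correct proof modulo standard facts about symmetric spaces; the one place I would ask you to write out details rather than assert them is precisely the step "the ideal acts trivially on $ M $", since that is where normality and transitivity actually get used.
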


We here define flat quandles, by referring to this theorem.

\begin{Def}\label{Def4.5}
A quandle $ (X,s) $ is said to be \textit{flat} if the group of displacements $ G^{0}(X,s) $ is commutative.
\end{Def}

Here we give some examples of flat quandles.

\begin{Ex}\label{Ex4.6}
The trivial quandles and the dihedral quandles are flat.
\end{Ex}

\begin{proof}
Let $ (X,s) $ be the trivial quandle. Recall that $ s_{x} = \id_{X} $ for every $ x \in X $. Hence $ G^{0}(X,s) = \{ \id_{X} \} $ is commutative.

Recall that the dihedral quandle $ R_{n} = (\Z_{n},s) $ is given by $ s_{x}(y) = 2x-y $. Hence one can see that
\begin{align*}
s_{x} \circ s_{y}(z) = s_{x}(2y-z) = 2(x-y)+z.
\end{align*}
Let us denote by $ r_{x}(z) := 2x + z $. Then we have
\begin{align*}
G^{0}(R_{n}) = \{ r_{x} \mid x \in \Z_{n} \}.
\end{align*}
This is clearly commutative.
\end{proof}

Finally in this subsection, we study a relationship between the flatness and direct products of quandles.

\begin{Prop}\label{Prop4.7}
Let $ (X,s^{X}) $ and $ (Y,x^{Y}) $ be quandles. Then the direct product $ (X,s^{X}) \times (Y,s^{Y}) $ is flat if and only if both of $ (X,s^{X}) $ and $ (Y,s^{Y}) $ are flat.
\end{Prop}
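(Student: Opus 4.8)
The plan is to base everything on the inclusion supplied by Proposition~\ref{Prop4.3},
\[
G^{0}((X,s^{X}) \times (Y,s^{Y})) \subset G^{0}(X,s^{X}) \times G^{0}(Y,s^{Y}),
\]
and to treat the two implications quite differently, since this inclusion is directly useful in only one of them.

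For the ``if'' direction I would argue immediately: assuming both $G^{0}(X,s^{X})$ and $G^{0}(Y,s^{Y})$ are commutative, their direct product is commutative, and since $G^{0}((X,s^{X}) \times (Y,s^{Y}))$ sits inside it as a subgroup by Proposition~\ref{Prop4.3}, it too is commutative. Hence the product is flat. This part requires no real work.

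The ``only if'' direction is where the difficulty lies, because the inclusion above runs in the unhelpful direction: commutativity of the smaller group $G^{0}((X,s^{X}) \times (Y,s^{Y}))$ does not obviously propagate to the larger group $G^{0}(X,s^{X}) \times G^{0}(Y,s^{Y})$. My plan is to push commutativity down to each factor by a ``fixed second coordinate'' device. First I would fix an arbitrary point $y_{0} \in Y$. Then, for any $x_{1}, x_{2} \in X$, I would observe that
\[
(s^{X} \times s^{Y})_{(x_{1},y_{0})} \circ (s^{X} \times s^{Y})_{(x_{2},y_{0})} = (s^{X}_{x_{1}} \circ s^{X}_{x_{2}}) \times (s^{Y}_{y_{0}} \circ s^{Y}_{y_{0}})
\]
is a generator of $G^{0}((X,s^{X}) \times (Y,s^{Y}))$ whose $Y$-component $w := s^{Y}_{y_{0}} \circ s^{Y}_{y_{0}}$ is independent of $x_{1}, x_{2}$.

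Now I would take two such generators $a = u \times w$ and $b = v \times w$, with $u = s^{X}_{x_{1}} \circ s^{X}_{x_{2}}$ and $v = s^{X}_{x_{3}} \circ s^{X}_{x_{4}}$, and apply the assumed commutativity $a \circ b = b \circ a$. Because both $a \circ b$ and $b \circ a$ have $Y$-component equal to $w \circ w$, the equation collapses in the $X$-component to $u \circ v = v \circ u$. Thus any two generators of $G^{0}(X,s^{X})$ commute, so $G^{0}(X,s^{X})$ is commutative; the symmetric argument handles $G^{0}(Y,s^{Y})$, and both factors are flat. The key difficulty to watch is exactly the asymmetry of Proposition~\ref{Prop4.3}, and the point that resolves it is holding the auxiliary coordinate at a single $y_{0}$ so that it cancels on both sides of $a \circ b = b \circ a$. (Implicitly this uses $Y \neq \emptyset$; I would assume throughout that quandles are nonempty.)
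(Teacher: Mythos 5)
Your proposal is correct and takes essentially the same approach as the paper: the ``if'' part via the inclusion of Proposition~\ref{Prop4.3}, and the ``only if'' part by commuting two generators of the form $(s^{X}_{x_{1}} \circ s^{X}_{x_{2}}) \times (s^{Y}_{y} \circ s^{Y}_{y})$ and $(s^{X}_{x_{3}} \circ s^{X}_{x_{4}}) \times (s^{Y}_{y} \circ s^{Y}_{y})$ with a fixed second coordinate so that the $Y$-components cancel, which is precisely the paper's argument. Your explicit remark that $Y$ must be nonempty is a minor detail the paper leaves implicit.
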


\begin{proof}
The ``if'' part easily follows from Proposition~\ref{Prop4.3}. We show the ``only if'' part. Assume that the direct product $ (X,s^{X}) \times (Y,s^{Y}) $ is flat. Take any $ x_{1}, x_{2}, x_{3}, x_{4} \in X $, and $ y \in Y $. Then one knows 
\begin{align*}
( s^{X}_{x_{1}} \circ s^{X}_{x_{2}} ) \times ( s^{Y}_{y} \circ s^{Y}_{y} ), ( s^{X}_{x_{3}} \circ s^{X}_{x_{4}} ) \times ( s^{Y}_{y} \circ s^{Y}_{y} ) \in G^{0}((X,s^{X}) \times (Y,s^{Y})).
\end{align*}
By assumption, they are commutative. Therefore, $ s^{X}_{x_{1}} \circ s^{X}_{x_{2}} $ and $ s^{X}_{x_{3}} \circ s^{X}_{x_{4}} $ are commutative. This proves that $ G^{0}(X,s^{X}) $ is commutative, and hence $ (X,s^{X}) $ is flat. Similarly, $ (Y,s^{Y}) $ is flat.
\end{proof}

\section{Flat connected quandles}\label{sec5}

In this section, we study flat connected quandles (but not necessarily finite). We show that such quandles are constructed from particular kinds of quandle triplets.

First of all, we give a general lemma for quandle triplets of connected quandles. Namely, one can construct quandle triplets by the groups of displacements.

\begin{Lem}\label{Lem5.1}
Let $ (X,s) $ be a connected quandle and $ x \in X $. We put
\begin{align*}
G := G^{0}(X,s),\ K := G_{x},\ \sigma(g) := s_{x} \circ g \circ s^{-1}_{x} \quad (g \in G).
\end{align*}
Then $ (G,K,\sigma) $ is a quandle triplet, and $ (X,s) \simeq Q(G,K,\sigma) $.
\end{Lem}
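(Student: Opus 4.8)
The plan is to invoke Proposition~\ref{Prop3.3}, which already constructs a quandle triplet from a homogeneous quandle together with a suitable subgroup $G \subset \Aut(X,s)$. The key observation is that $G := G^{0}(X,s)$ is a legitimate choice of subgroup for that proposition. Indeed, by Definition~\ref{Def4.1}, $G^{0}(X,s) \subset \Inn(X,s) \subset \Aut(X,s)$, so $G$ is a subgroup of $\Aut(X,s)$ as required. Moreover, since $(X,s)$ is assumed connected, Lemma~\ref{Lem4.2} guarantees that $G = G^{0}(X,s)$ acts transitively on $X$, which is precisely the transitivity hypothesis needed for part~(2) of Proposition~\ref{Prop3.3}.

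The main step is therefore to verify the hypothesis $s_{x} G s_{x}^{-1} \subset G$, so that the map $\sigma(g) = s_{x} \circ g \circ s_{x}^{-1}$ is a well-defined automorphism of $G$. First I would recall that $G^{0}(X,s)$ is generated by elements of the form $s_{a} \circ s_{b}$ with $a,b \in X$. It then suffices to check that conjugation by $s_{x}$ sends each such generator back into $G^{0}(X,s)$. Computing directly,
\begin{align*}
s_{x} \circ (s_{a} \circ s_{b}) \circ s_{x}^{-1} = (s_{x} \circ s_{a}) \circ (s_{x} \circ s_{b})^{-1},
\end{align*}
and by the remark following Definition~\ref{Def4.1} (namely that $s_{c} \circ s_{d}^{-1} \in G^{0}(X,s)$ for all $c,d$), the right-hand side lies in $G^{0}(X,s)$. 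Since conjugation is an automorphism of $\Aut(X,s)$ preserving the generating set of $G$, we conclude $s_{x} G s_{x}^{-1} = G$, so $\sigma$ is a well-defined element of $\Aut(G)$.

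With the hypotheses of Proposition~\ref{Prop3.3} now established for the subgroup $G = G^{0}(X,s)$ and the basepoint $x$, its conclusions apply directly. Part~(1) of that proposition yields that $(G, G_{x}, \sigma) = (G, K, \sigma)$ is a quandle triplet, where $K := G_{x}$ is the isotropy subgroup of the $G$-action at $x$. Part~(2), whose transitivity hypothesis we verified above via Lemma~\ref{Lem4.2}, yields the isomorphism $Q(G,G_{x},\sigma) \simeq (X,s)$, which is exactly the second assertion of the lemma.

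I expect the only genuinely substantive point to be the closure $s_{x} G s_{x}^{-1} \subset G$; everything else is a matter of correctly matching the present setup to the hypotheses of the earlier propositions. This closure is mild because $G^{0}(X,s)$ is \emph{normal} in $\Inn(X,s)$ (it is generated by the products $s_a \circ s_b$, and conjugation by any $s_x$ permutes such products up to the sign-flip computation above), and $s_x \in \Inn(X,s)$. The whole proof is thus essentially a specialization of Proposition~\ref{Prop3.3} to the group of displacements, with Lemma~\ref{Lem4.2} supplying the transitivity that connectedness guarantees.
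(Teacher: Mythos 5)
Your overall strategy is exactly the paper's: specialize Proposition~\ref{Prop3.3} to $G = G^{0}(X,s)$, get transitivity from connectedness via Lemma~\ref{Lem4.2}, and reduce everything to the closure condition $s_{x} G s_{x}^{-1} \subset G$. However, the displayed computation at your central step is false. In general
\begin{align*}
s_{x} \circ (s_{a} \circ s_{b}) \circ s_{x}^{-1} \neq (s_{x} \circ s_{a}) \circ (s_{x} \circ s_{b})^{-1},
\end{align*}
since the right-hand side equals $s_{x} \circ s_{a} \circ s_{b}^{-1} \circ s_{x}^{-1}$: an unwanted inverse appears, and the two sides agree only when $s_{b} = s_{b}^{-1}$, i.e.\ when the quandle is involutive. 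Involutivity is not a hypothesis here --- in this paper it only emerges later (Lemma~\ref{Lem5.3}, via flatness), and Lemma~\ref{Lem5.1} is stated for arbitrary connected quandles. So the one step you yourself flag as the crux is, as written, an incorrect identity, and your subsequent normality remark (``conjugation permutes such products up to the sign-flip'') rests on it.

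The gap is repaired in one line, in either of two ways. First, regroup without introducing any inverse: $s_{x} \circ s_{a} \circ s_{b} \circ s_{x}^{-1} = (s_{x} \circ s_{a}) \circ (s_{b} \circ s_{x}^{-1})$, where the first factor is a generator of $G^{0}(X,s)$ and the second has the form $s_{c} \circ s_{d}^{-1}$ covered by the remark after Definition~\ref{Def4.1}. Second, and more structurally, since $s_{x} \in \Aut(X,s)$, axiom (Q3) gives $s_{x} \circ s_{a} \circ s_{x}^{-1} = s_{s_{x}(a)}$, so conjugation by $s_{x}$ sends the generator $s_{a} \circ s_{b}$ to the generator $s_{s_{x}(a)} \circ s_{s_{x}(b)}$; this is the correct justification of the normality of $G^{0}(X,s)$ in $\Inn(X,s)$ that you appeal to. For comparison, the paper argues elementwise rather than on generators: for arbitrary $g \in G$ it writes $\sigma(g) = s_{x} \circ g \circ s_{x}^{-1} = g \circ (s_{g^{-1}.x} \circ s_{x}^{-1}) \in G$, using $g^{-1} \circ s_{x} \circ g = s_{g^{-1}.x}$ together with the same remark. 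Apart from the faulty identity, your matching of hypotheses (that $G^{0}(X,s) \subset \Inn(X,s) \subset \Aut(X,s)$, and that connectedness supplies the transitivity needed in Proposition~\ref{Prop3.3}~(2)) is correct and coincides with the paper's argument.
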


\begin{proof}
Let $ (X,s) $ be a connected quandle and $ x \in X $. In view of Proposition~\ref{Prop3.3}, we have only to prove that $ \sigma(G) \subset G $ holds and $ G $ acts transitively on $ X $. Take any $ g \in G $. Then we have
\begin{align*}
\sigma(g) =  s_{x} \circ g \circ s^{-1}_{x} = g \circ (s_{g^{-1}.x} \circ s^{-1}_{x}) \in G.
\end{align*}
This shows $ \sigma(G) \subset G $. Furthermore, since $ (X,s) $ is connected, the action of $ G=G^{0}(X,s) $ on $ X $ is transitive from Lemma~\ref{Lem4.2}.
\end{proof}

We next show that, if $ (X,s) $ are flat connected quandles, then the quandle triplets defined above are of particular forms.

\begin{Prop}\label{Prop5.2}
Let $ (X,s) $ be a flat connected quandle, $ x \in X $, and $ (G,K,\sigma) $ be the quandle triplet defined in Lemma~\ref{Lem5.1}. Then we have
\begin{itemize}
\item[(1)]
$ G $ is commutative,
\item[(2)]
$ K = \{ \id_{X} \} $,
\item[(3)]
$ \sigma $ is involutive.
\end{itemize}
\end{Prop}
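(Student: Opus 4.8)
The plan is to establish the three statements in order, making repeated use of two facts already available: that $ G = G^{0}(X,s) $ by construction in Lemma~\ref{Lem5.1}, and that $ G $ acts transitively on $ X $ by Lemma~\ref{Lem4.2} (since $ (X,s) $ is connected). Statement (1) is then immediate: flatness of $ (X,s) $ means, by Definition~\ref{Def4.5}, precisely that $ G^{0}(X,s) = G $ is commutative.

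For statement (2), I would exploit the interaction between transitivity and commutativity. Recall $ K = G_{x} $ is the isotropy subgroup at $ x $. For an arbitrary $ y \in X $, transitivity yields some $ h \in G $ with $ h.x = y $, and the standard orbit--stabilizer relation gives $ G_{y} = h G_{x} h^{-1} $. Since $ G $ is abelian by (1), conjugation is trivial, so $ G_{y} = G_{x} $ for every $ y \in X $. Consequently any $ g \in K = G_{x} $ lies in every isotropy subgroup, i.e.\ $ g.y = y $ for all $ y \in X $; as $ g $ is a bijection of $ X $, this forces $ g = \id_{X} $, whence $ K = \{ \id_{X} \} $.

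For statement (3), I would compute directly that $ \sigma^{2}(g) = s_{x} \circ (s_{x} \circ g \circ s^{-1}_{x}) \circ s^{-1}_{x} = s_{x}^{2} \circ g \circ s_{x}^{-2} $. The key observation is that $ s_{x}^{2} = s_{x} \circ s_{x} $ is a product of two symmetries, hence lies in $ G^{0}(X,s) = G $ (and so does its inverse). Because $ G $ is commutative, $ s_{x}^{2} $ commutes with $ g $, giving $ \sigma^{2}(g) = g $ for every $ g \in G $; thus $ \sigma $ is involutive.

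All three arguments are short and formal once part (1) is in hand. The only step demanding a little care is (2): one must notice that commutativity collapses all the isotropy subgroups to a single common one, from which triviality follows at once. I expect this conjugacy observation to be the main (though modest) obstacle, while (3) reduces to the single remark that $ s_{x}^{2} \in G $.
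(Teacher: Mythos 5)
Your proposal is correct and follows essentially the same route as the paper: (1) is read off from the definition of flatness, (3) is the identical computation using $s_{x}^{2} \in G^{0}(X,s)$ and commutativity, and your orbit--stabilizer phrasing of (2) is just a repackaging of the paper's direct calculation $k.y = k.(g.x) = g.(k.x) = g.x = y$, both resting on the same interplay of transitivity and commutativity.
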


\begin{proof}
Since $ (X,s) $ is flat, one knows by definition that $ G = G^{0}(X,s) $ is commutative. This proves (1).

We show (2). Take any $ k \in K $ and $ y \in X $. Since the action of $ G $ on $ X $ is transitive, there exists $ g \in G $ such that $ g.x = y $. One has $ gk=kg $ since $ G $ is commutative, and hence we have
\begin{align*}
k.y = k.(g.x) = g.(k.x) = g.x = y.
\end{align*}
This yields that $ k = \id_{X} $, which proves (2).

We show (3). Take any $ g \in G $. Since $ s^{2}_{x} \in G $ and $ G $ is commutative, one has
\begin{align*}
s^{2}_{x} \circ g = g \circ s^{2}_{x}.
\end{align*}
This yields that
\begin{align*}
\sigma^{2}(g) = \sigma(s_{x} \circ g \circ s^{-1}_{x}) = s^{2}_{x} \circ g \circ s^{-2}_{x} = g \circ s^{2}_{x} \circ s^{-2}_{x} = g.
\end{align*}
This completes the proof.
\end{proof}

For a flat connected quandle, we have constructed a particular kind of quandle triplet $ (G,K,\sigma) $. In the remaining of this section, we study the converse. To be precise, we study the quandles $ Q(G,K,\sigma) $ with $ G $ commutative, $ K = \{ e \} $, and $ \sigma $ involutive. Note that such quandles $ Q(G,K,\sigma) $ might be disconnected, since the dihedral quandles with even cardinalities can be constructed in this way (see Subsection~\ref{subse3.3}). First of all, we show the following two lemmas.

\begin{Lem}\label{Lem5.3}
Let $ (G,K,\sigma) $ be a quandle triplet, and assume that $ \sigma $ is involutive. Then the quandle $ Q(G,K,\sigma) $ is involutive.
\end{Lem}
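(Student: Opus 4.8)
The plan is to verify directly that $s_{[g]}^{2} = \id_{G/K}$ for every $[g] \in G/K$, using the explicit formula for the quandle structure of $Q(G,K,\sigma)$ recorded in Proposition~\ref{Prop3.2}, namely $s_{[g]}([h]) = [g\sigma(g^{-1}h)]$. Since the well-definedness of $s$ has already been established in Proposition~\ref{Prop3.2}(1), I may freely carry out the computation with representatives in $G$, knowing that the resulting maps descend to $G/K$.

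First I would fix $[g],[h] \in G/K$ and set $h' := g\sigma(g^{-1}h)$, so that $s_{[g]}([h]) = [h']$. Applying $s_{[g]}$ a second time requires the element $g^{-1}h'$, and here the group element $g$ cancels cleanly: $g^{-1}h' = g^{-1}g\sigma(g^{-1}h) = \sigma(g^{-1}h)$. Thus the argument fed into $\sigma$ on the second application is exactly the output of $\sigma$ from the first application.

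The key step is then the use of involutivity. Because $\sigma$ is involutive, i.e. $\sigma^{2} = \id_{G}$, one obtains $\sigma(g^{-1}h') = \sigma(\sigma(g^{-1}h)) = g^{-1}h$, and therefore $s_{[g]}([h']) = [g \cdot g^{-1}h] = [h]$. Combining the two applications yields $s_{[g]}^{2}([h]) = [h]$ for all $[h] \in G/K$, which is precisely the assertion $s_{[g]}^{2} = \id_{G/K}$, and hence $Q(G,K,\sigma)$ is involutive.

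I do not anticipate any genuine obstacle: the statement follows immediately from the defining formula together with the single hypothesis $\sigma^{2} = \id_{G}$, with the cancellation of $g$ doing the bookkeeping and the involutivity of $\sigma$ supplying the substance. The only point requiring minor care is to keep the computation at the level of representatives while relying on the fact that $s$ is a well-defined map on $G/K$, but this is already guaranteed by Proposition~\ref{Prop3.2} and needs no separate justification.
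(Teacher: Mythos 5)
Your proposal is correct and is essentially identical to the paper's own proof: both verify $s_{[g]}^{2}([h]) = [g\sigma(g^{-1}\,g\sigma(g^{-1}h))] = [g\sigma^{2}(g^{-1}h)] = [h]$ directly from the formula in Proposition~\ref{Prop3.2}, with the cancellation of $g$ followed by $\sigma^{2} = \id_{G}$. Your extra remarks about well-definedness and naming the intermediate element $h'$ are harmless bookkeeping, not a different argument.
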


\begin{proof}
Denote by $ (X,s) := Q(G,K,\sigma) $, and take any $ g \in G $. We have only to show that $ s_{[g]}^{2} = \id_{X} $. For any $ h \in G $, one has
\begin{align*}
s_{[g]}^{2}([h]) = s_{[g]}([g \sigma(g^{-1}h)]) = [g\sigma(g^{-1}(g\sigma(g^{-1}h)))] = [g\sigma^{2}(g^{-1}h)] = [h],
\end{align*}
since $ \sigma^{2} = \id_{G} $. This completes the proof.
\end{proof}

\begin{Lem}\label{Lem5.4}
Let $ (G,K,\sigma) $ be a quandle triplet, and assume that $ G $ is commutative and $ \sigma $ is involutive. Denote by $ (X,s) := Q(G,K,\sigma) $. Then one has
\begin{align*}
s_{[g]} \circ s_{[h]} ([e]) = s_{[gh^{-1}]} ([e]) \quad (\forall g,h \in G).
\end{align*}
\end{Lem}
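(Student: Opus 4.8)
The plan is to prove the identity by a direct computation of both sides from the defining formula $ s_{[g]}([h]) = [g\sigma(g^{-1}h)] $, using three ingredients: that $ \sigma $ is a group automorphism (hence $ \sigma(ab)=\sigma(a)\sigma(b) $ and $ \sigma(a^{-1})=\sigma(a)^{-1} $), that $ \sigma^{2} = \id_{G} $, and that $ G $ is commutative.

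First I would evaluate the left-hand side by applying the two symmetries in turn. Applying $ s_{[h]} $ to $ [e] $ gives $ s_{[h]}([e]) = [h\sigma(h^{-1})] = [h\sigma(h)^{-1}] $, where the last equality uses that $ \sigma $ is an automorphism. Applying $ s_{[g]} $ then yields
\[
s_{[g]}\circ s_{[h]}([e]) = [\,g\,\sigma(g^{-1}h\sigma(h)^{-1})\,].
\]
I would then distribute $ \sigma $ over this product as a homomorphism and simplify the inner term $ \sigma(\sigma(h)^{-1}) $ to $ h^{-1} $ using $ \sigma^{2} = \id_{G} $, arriving at the representative $ g\,\sigma(g)^{-1}\sigma(h)\,h^{-1} $.

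Next I would compute the right-hand side directly: since $ (gh^{-1})^{-1} = hg^{-1} $,
\[
s_{[gh^{-1}]}([e]) = [\,gh^{-1}\,\sigma(hg^{-1})\,] = [\,gh^{-1}\,\sigma(h)\sigma(g)^{-1}\,],
\]
again expanding $ \sigma $ as a homomorphism. Finally, the commutativity of $ G $ lets me reorder the group elements in either representative, so that both sides are represented by $ g\,\sigma(g)^{-1}\sigma(h)\,h^{-1} $, which establishes the claimed equality.

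Since all steps are straightforward manipulations, there is no genuine obstacle; the only points requiring care are applying $ \sigma $ correctly as an automorphism (in particular the cancellation $ \sigma(\sigma(h)^{-1}) = h^{-1} $ coming from $ \sigma^{2} = \id_{G} $) and invoking commutativity only at the end to match the two representatives. It is worth emphasizing that commutativity is essential here: it is exactly what forces the two orderings of factors to coincide as elements of $ G $, hence as cosets in $ G/K $, so this hypothesis cannot be dispensed with.
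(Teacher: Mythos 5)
Your proof is correct and follows essentially the same route as the paper: both compute $s_{[g]}\circ s_{[h]}([e])$ and $s_{[gh^{-1}]}([e])$ directly from the defining formula, simplify the left-hand side to $[g\,\sigma(g^{-1}h)\,h^{-1}]$ using $\sigma^{2}=\id_{G}$, and then match the two representatives by commutativity of $G$. Your version merely spells out the distribution of $\sigma$ as a homomorphism in slightly more detail than the paper does.
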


\begin{proof}
Take any $ g,h \in G $. Since $ \sigma $ is an involutive automorphism, we have
\begin{align*}
s_{[g]} \circ s_{[h]}([e]) &= s_{[g]} ([h \sigma(h^{-1})]) = [g \sigma(g^{-1}h \sigma(h^{-1}))] = [g \sigma(g^{-1}h)h^{-1}],\\
s_{[gh^{-1}]}([e]) &= [gh^{-1} \sigma(hg^{-1})].
\end{align*}
They coincide, since $ G $ is commutative. This completes the proof.
\end{proof}

We are now in position to study our quandles $ Q(G,K,\sigma) $. The following gives a criteria for our $ Q(G,K,\sigma) $ to be connected.

\begin{Prop}\label{Prop5.5}
Let $ (G,K,\sigma) $ be a quandle triplet, and assume that $ G $ is commutative, $ K = \{e\} $, and $ \sigma $ is involutive. We identify $ G \cong G/\{e\} $, and denote by $ (G,s) := Q(G,K,\sigma) $. Then $ (G,s) $ is connected if and only if the following $ \varphi $ is surjective$:$
\begin{align*}
\varphi : G \rightarrow G : g \mapsto s_{g}(e).
\end{align*}
\end{Prop}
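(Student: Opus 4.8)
The plan is to identify $Q(G,K,\sigma)$ with $G$ itself (as $K=\{e\}$) and to compute the orbit of the basepoint $e$ under the group of displacements explicitly. By Lemma~\ref{Lem4.2}, connectedness of $(G,s)$ is equivalent to $G^{0}(G,s)$ acting transitively on $G$, hence to the $G^{0}(G,s)$-orbit of $e$ being all of $G$. Since the quandle structure is $s_{g}(h)=g\sigma(g^{-1}h)$ and $\sigma$ is a homomorphism, I would first record the two basic formulas $\varphi(g)=s_{g}(e)=g\sigma(g^{-1})$ and $s_{g}(h)=\varphi(g)\sigma(h)$. Thus everything reduces to understanding the image $\varphi(G)$.

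Next I would carry out the key computation describing how a product of two symmetries acts globally, not merely at $e$. Using $s_{g}(h)=\varphi(g)\sigma(h)$, the involutivity $\sigma^{2}=\id_{G}$, and the identity $\sigma(\varphi(h))=\varphi(h)^{-1}$ (itself a consequence of involutivity), one obtains $s_{g}\circ s_{h}(k)=\varphi(g)\varphi(h)^{-1}k$ for every $k\in G$; this is consistent with, and strengthens, Lemma~\ref{Lem5.4}. In other words each generator of $G^{0}(G,s)$ acts as left translation by an element of the form $\varphi(g)\varphi(h)^{-1}$, so $G^{0}(G,s)$ acts entirely by translations and the orbit of $e$ is exactly the subgroup of $G$ generated by $\varphi(G)$.

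It then remains to pass from ``generated by $\varphi(G)$'' to ``equal to $\varphi(G)$''. Here I would observe that, because $G$ is commutative and $\sigma$ is a group automorphism, the map $\varphi$ is itself a group homomorphism, so that $\varphi(G)$ is already a subgroup and coincides with the subgroup it generates. Consequently the $G^{0}(G,s)$-orbit of $e$ equals $\varphi(G)$, and connectedness is equivalent to $\varphi(G)=G$, that is, to surjectivity of $\varphi$. The ``if'' direction is in fact immediate even without the orbit computation: if $\varphi$ is surjective, then every element of $G$ is of the form $s_{a}(e)$, so the single symmetries already move $e$ onto all of $G$.

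The main obstacle is precisely this last step. Transitivity only delivers that $\varphi(G)$ \emph{generates} $G$, and for a general subset this is strictly weaker than $\varphi(G)=G$; the two become equivalent exactly because $\varphi$ is a homomorphism. Recognizing that commutativity of $G$ forces $\varphi$ to be a homomorphism is therefore the crux of the argument, while the involutivity of $\sigma$ is what makes each $s_{g}\circ s_{h}$ a clean translation by an element of the image of $\varphi$.
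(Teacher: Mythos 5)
Your proof is correct, but it takes a genuinely different route from the paper's. The paper stays with $\Inn(G,s)$ (the definition of connectedness) and proves the set equality $\Inn(G,s).e=\varphi(G)$: it first invokes Lemma~\ref{Lem5.3} to see that the quandle is involutive, so every inner automorphism is a word $s_{g_{1}}\circ\cdots\circ s_{g_{n}}$ with no inverses, and then collapses such a word applied to $e$ into a single $s_{g_{0}}(e)$ by induction on $n$ using Lemma~\ref{Lem5.4}; notably, this route never needs the fact that $\varphi$ is a group homomorphism (the paper records that fact only after the proof, for use in Section~\ref{sec6}). You instead pass to $G^{0}(G,s)$ via Lemma~\ref{Lem4.2} and prove the stronger, global statement that each generator acts as a translation, $s_{g}\circ s_{h}(k)=\varphi(g)\varphi(h)^{-1}k$ for all $k$ --- this refines Lemma~\ref{Lem5.4}, which is exactly its evaluation at $k=e$ --- so that $G^{0}(G,s)$ acts by translations and the orbit of $e$ is the subgroup generated by $\varphi(G)$; you then need, and correctly supply, the observation that commutativity of $G$ makes $\varphi$ a homomorphism, so that this subgroup equals $\varphi(G)$ itself. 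Both arguments rest on the same underlying computation (commutativity plus $\sigma^{2}=\id_{G}$), but yours buys a sharper structural picture --- $G^{0}(G,s)$ is literally a group of translations by elements of the subgroup $\varphi(G)$, which foreshadows how $\varphi$ is used in Section~\ref{sec6} --- at the cost of making the homomorphism property of $\varphi$ the crux, while the paper's word-by-word induction avoids that property entirely and works directly with $\Inn(G,s)$, using Lemma~\ref{Lem5.3} (rather than Lemma~\ref{Lem4.2}) as the tool that tames inverses.
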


\begin{proof}
In order to show this proposition, it is enough to prove
\begin{align*}
\Inn(G,s).e = \varphi(G).
\end{align*}
We prove this equality. The inclusion $ (\supset) $ is clear, since $ s_{g} \in \Inn(G,s) $. We show $ (\subset) $. Take any $ k \in \Inn(G,s) $. Since $ \sigma $ is involutive, Lemma~\ref{Lem5.3} yields that $ (G,s) $ is involutive. That is, $ s^{-1}_{g} = s_{g} $ holds for any $ g \in G $. Therefore, there exist $ g_{1}, \ldots, g_{n} \in G $ such that $ k = s_{g_{1}} \circ \cdots \circ s_{g_{n}} $. Furthermore, an easy induction in terms of Lemma~\ref{Lem5.4} shows that there exists $ g_{0} \in G $ such that
\begin{align*}
k.e = s_{g_{0}}(e).
\end{align*}
This completes the proof, since $ s_{g_{0}}(e) = \varphi(g_{0}) \in \varphi(G) $.
\end{proof}

Note that the map $ \varphi $ defined above is a group homomorphism. We will use this map $ \varphi $ in the next section.

\section{Flat connected finite quandles}\label{sec6}

In this section, we prove Theorem~\ref{Thm1.1}, which classifies flat connected finite quandles completely. First of all, we show the ``if'' part of Theorem~\ref{Thm1.1}. Recall that $ R_{n} $ denotes the dihedral quandle of order $ n $.

\begin{Prop}\label{Prop6.1}
Let $ q_{1}, \ldots, q_{n} $ be odd prime powers. Then the direct product $ R_{q_{1}} \times \cdots \times R_{q_{n}} $ is a flat connected finite quandle.
\end{Prop}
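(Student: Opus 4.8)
The plan is to obtain Proposition~\ref{Prop6.1} as an immediate corollary of the preservation results for direct products proved in Sections~\ref{sec2} and~\ref{sec4}, combined with the elementary properties of a single dihedral quandle; there is no substantial difficulty to overcome here.

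First I would check the three desired properties for each individual factor $ R_{q_{i}} $. Finiteness is clear, since $ R_{q_{i}} = \Z_{q_{i}} $ has cardinality $ q_{i} $. Flatness is precisely Example~\ref{Ex4.6}, which records that every dihedral quandle is flat. Connectedness follows from Example~\ref{Ex2.7}: an odd prime power is in particular odd, and $ R_{q_{i}} $ is connected exactly when its order is odd.

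Next I would transfer these three properties to the $ n $-fold product. Since Proposition~\ref{Prop2.9}~(2) (connectedness) and Proposition~\ref{Prop4.7} (flatness) are stated for products of two quandles, I would argue by induction on $ n $, using the associativity of the direct product noted after Lemma~\ref{Lem2.8}. The base case $ n = 1 $ is the single-factor verification above. For the inductive step, writing
\begin{align*}
R_{q_{1}} \times \cdots \times R_{q_{n}} \simeq (R_{q_{1}} \times \cdots \times R_{q_{n-1}}) \times R_{q_{n}},
\end{align*}
the first factor is flat and connected by the inductive hypothesis and the second by the single-factor case, so Propositions~\ref{Prop4.7} and~\ref{Prop2.9}~(2) yield flatness and connectedness of the whole product; finiteness is obvious, being a finite product of finite sets.

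If anything counts as the main point to be careful about, it is merely that the two product propositions handle only two factors at a time, so the passage to $ n $ factors must proceed through the induction and the associativity isomorphism rather than a direct application. Apart from this bookkeeping, the statement is a straightforward assembly of results already established.
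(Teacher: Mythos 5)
Your proposal is correct and follows essentially the same route as the paper: flatness of each factor via Example~\ref{Ex4.6} combined with Proposition~\ref{Prop4.7}, connectedness via Example~\ref{Ex2.7} combined with Proposition~\ref{Prop2.9}~(2), and obvious finiteness. The only difference is that you make explicit the induction needed to pass from the two-factor statements to the $n$-fold product, a bookkeeping step the paper leaves implicit.
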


\begin{proof}
We show that $ X := R_{q_{1}} \times \cdots \times R_{q_{n}} $ is flat, connected, and finite. One knows that $ R_{q_{1}}, \ldots, R_{q_{n}} $ are flat from Example~\ref{Ex4.6}, and hence $ X $ is flat from Proposition~\ref{Prop4.7}. Furthermore, since $ q_{1}, \ldots, q_{n} $ are odd, $ R_{q_{1}}, \ldots, R_{q_{n}} $ are connected from Example~\ref{Ex2.7}, and hence $ X $ is connected from Proposition~\ref{Prop2.9} (2). In addition, $ X $ is obviously finite. This completes the proof.
\end{proof}

In the remaining of this section, we show the ``only if'' part of Theorem~\ref{Thm1.1}. For this purpose, we study properties of the corresponding quandle triplets.

\begin{Lem}\label{Lem6.2}
Let $ (X,s) $ be a flat, connected, and finite quandle, and $ (G,K,\sigma) $ be the quandle triplet defined in Lemma~\ref{Lem5.1}. Then we have
\begin{itemize}
\item[(1)]
$ G $ is finite,
\item[(2)]
$ \Fix(\sigma,G) = \{ e \} $,
\item[(3)]
$ \sigma(g) = g^{-1} $ for every $ g \in G $.
\end{itemize}
\end{Lem}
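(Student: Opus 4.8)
The plan is to deduce all three statements from the single group homomorphism $ \varphi $ introduced in Proposition~\ref{Prop5.5}. Statement (1) is immediate: since $ G = G^{0}(X,s) \subset \Aut(X,s) \subset \Bij(X) $ and $ X $ is finite, the group $ \Bij(X) $ of bijections of $ X $ is finite, so $ G $ is finite.

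For (2) and (3), I would first record that Proposition~\ref{Prop5.2} applies to $ (G,K,\sigma) $, giving that $ G $ is commutative, $ K = \{e\} $, and $ \sigma $ is involutive. Hence, identifying $ G \cong G/\{e\} $ as in Proposition~\ref{Prop5.5} and recalling $ (X,s) \simeq Q(G,\{e\},\sigma) = (G,s) $ from Lemma~\ref{Lem5.1} (with basepoint $ x $ corresponding to $ e $), I can consider
\begin{align*}
\varphi : G \rightarrow G : g \mapsto s_{g}(e) = g\sigma(g^{-1}).
\end{align*}
This $ \varphi $ is a group homomorphism (the remark following Proposition~\ref{Prop5.5}). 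Since $ (X,s) $ is connected, so is $ (G,s) $, and Proposition~\ref{Prop5.5} tells us that $ \varphi $ is surjective. Combined with (1), $ \varphi $ is a surjective endomorphism of a \emph{finite} group, hence bijective; this pigeonhole step is the crux of the argument.

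Statement (2) then follows by identifying the kernel: $ \varphi(g) = e $ means $ g = \sigma(g) $, so $ \ker\varphi = \Fix(\sigma,G) $, and injectivity of $ \varphi $ forces $ \Fix(\sigma,G) = \{e\} $. For (3), I would use that $ \sigma $ is an involutive automorphism to compute, for each $ g \in G $,
\begin{align*}
\sigma(\varphi(g)) = \sigma(g)\,\sigma^{2}(g)^{-1} = \sigma(g)\,g^{-1} = \varphi(g)^{-1}.
\end{align*}
Since $ \varphi $ is surjective, every $ h \in G $ is of the form $ \varphi(g) $, and therefore $ \sigma(h) = h^{-1} $ for all $ h \in G $, which is (3).

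The only genuinely delicate point is the one highlighted above: recognizing that connectedness, via surjectivity of $ \varphi $, together with finiteness of $ G $, upgrades $ \varphi $ to a bijection. Everything else is routine, and it is worth noting that (3) by itself would only yield $ \Fix(\sigma,G) = \{ g \mid g^{2} = e \} $; it is the injectivity of $ \varphi $ that additionally pins this down to $ \{e\} $ (and, implicitly, forces $ |G| $ to be odd, which will be relevant in the sequel).
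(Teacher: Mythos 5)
Your proof is correct and takes essentially the same route as the paper: both rest on the map $\varphi(g) = s_{g}(e) = g\sigma(g^{-1})$ of Proposition~\ref{Prop5.5}, whose surjectivity (from connectedness) together with finiteness of $G$ upgrades it to a bijection, and both obtain (2) by observing that $\Fix(\sigma,G)$ is exactly what $\varphi$ sends to $e$. The only divergence is in (3): the paper shows $g\sigma(g) \in \Fix(\sigma,G)$ and invokes (2), whereas you show $\sigma(\varphi(g)) = \varphi(g)^{-1}$ and use surjectivity of $\varphi$ directly --- both are valid, yours having the minor structural advantage that (3) becomes independent of (2).
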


\begin{proof}
The assertion (1) is clear, since $ X $ is finite. We show (2). Take any $ g \in \Fix(\sigma,G) $, and consider the mapping $ \varphi : G \rightarrow G $ defined in Proposition~\ref{Prop5.5}. Then one has
\begin{align*}
\varphi(g) = s_{g}(e) = g \sigma (g^{-1}) = g g^{-1} = e = s_{e}(e) = \varphi(e).
\end{align*}
We here recall that $ \varphi $ is surjective from Proposition~\ref{Prop5.5}. Hence $ \varphi $ is injective, since $ G $ is finite. This shows that $ g = e $, which proves (2).

We show (3). Take any $ g \in G $. One knows from Proposition~\ref{Prop5.2} that $ \sigma $ is involutive and $ G $ is commutative. We thus have
\begin{align*}
\sigma(g \sigma(g)) = \sigma(g)g = g \sigma(g).
\end{align*}
This means $ g \sigma(g) \in \Fix(\sigma,G) $. It then follows from (2) that $ g \sigma(g) = e $, and hence $ \sigma(g) = g^{-1} $. This completes the proof.
\end{proof}

In order to study flat connected finite quandles, it is sufficient to study the above quandle triplets. The following proposition completes the proof of Theorem~\ref{Thm1.1}.

\begin{Prop}\label{Prop6.3}
Let $ (X,s) $ be a flat, connected, and finite quandle. Then there exist odd prime powers $ q_{1}, \ldots, q_{n} $ such that $ (X,s) \simeq R_{q_{1}} \times \cdots \times R_{q_{n}} $.
\end{Prop}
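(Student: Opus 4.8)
The plan is to combine the structural results already established for the quandle triplet $(G,K,\sigma)$ attached to $(X,s)$ by Lemma~\ref{Lem5.1}, and then to invoke the classification of finite abelian groups together with Proposition~\ref{Prop3.9}. First I would record what is already known: by Proposition~\ref{Prop5.2} the group $G$ is commutative, $K = \{e\}$, and $\sigma$ is involutive; by Lemma~\ref{Lem6.2} moreover $G$ is finite and $\sigma(g) = g^{-1}$ for all $g \in G$, i.e. $\sigma = -\id_G$ once $G$ is written additively. Thus Lemma~\ref{Lem5.1} gives $(X,s) \simeq Q(G, \{e\}, -\id_G)$ with $G$ a finite abelian group, and the whole task reduces to analyzing such a $G$.

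The decisive remaining point is to show that $|G|$ is odd, and this is exactly where connectedness is used in an essential way. Consider the homomorphism $\varphi : G \to G$, $g \mapsto s_g(e)$ from Proposition~\ref{Prop5.5}. Using $\sigma(g) = g^{-1}$ one computes $\varphi(g) = g\sigma(g^{-1}) = g^{2}$, the squaring map. Since $(X,s)$ is connected, Proposition~\ref{Prop5.5} tells us $\varphi$ is surjective, hence bijective because $G$ is finite. A finite abelian group on which squaring is injective can have no element of order $2$, so by Cauchy's theorem $|G|$ must be odd.

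With oddness in hand I would apply the fundamental theorem of finite abelian groups to write $G \cong \Z_{q_1} \times \cdots \times \Z_{q_n}$ with each $q_i$ a prime power; since $|G| = q_1 \cdots q_n$ is odd, every $q_i$ is an odd prime power. Under any such isomorphism the inversion automorphism $-\id_G$ corresponds to $-\id_{\Z_{q_1} \times \cdots \times \Z_{q_n}}$, whence $Q(G,\{e\},-\id_G) \simeq Q(\Z_{q_1} \times \cdots \times \Z_{q_n}, \{0\}, -\id)$. Proposition~\ref{Prop3.9} identifies the latter with $R_{q_1} \times \cdots \times R_{q_n}$, and chaining this with $(X,s) \simeq Q(G,\{e\},-\id_G)$ yields $(X,s) \simeq R_{q_1} \times \cdots \times R_{q_n}$, as required.

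The main obstacle is genuinely the parity step: the reduction to a finite abelian $G$ with $\sigma = -\id_G$ is bookkeeping on top of Section~\ref{sec5} and Lemma~\ref{Lem6.2}, and the final identification is a direct appeal to Proposition~\ref{Prop3.9}, but the fact that the cardinalities must be \emph{odd} prime powers is precisely the content of connectedness, extracted through the bijectivity of the squaring map $\varphi$.
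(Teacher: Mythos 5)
Your proposal is correct, and its skeleton matches the paper's: reduce via Lemma~\ref{Lem5.1}, Proposition~\ref{Prop5.2}, and Lemma~\ref{Lem6.2} to $ Q(G,\{e\},-\id_{G}) $ with $ G $ finite abelian, decompose $ G $ by the fundamental theorem of finite abelian groups, and conclude with Proposition~\ref{Prop3.9}. The one place where you genuinely diverge is the parity step. The paper performs the decomposition first, obtaining $ (X,s) \simeq R_{q_{1}} \times \cdots \times R_{q_{n}} $ with the $ q_{i} $ arbitrary prime powers, and only then extracts oddness at the quandle level: connectedness of $ (X,s) $ passes to each factor $ R_{q_{i}} $ by Proposition~\ref{Prop2.9}~(2), and $ R_{q_{i}} $ is connected if and only if $ q_{i} $ is odd by Example~\ref{Ex2.7}. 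You instead prove oddness at the group level before decomposing: with $ \sigma(g) = g^{-1} $ the homomorphism $ \varphi $ of Proposition~\ref{Prop5.5} becomes the squaring map $ g \mapsto g^{2} $, connectedness makes it surjective, finiteness makes it bijective, so $ G $ has no element of order $ 2 $, and $ |G| $ is odd by Cauchy's theorem. Both arguments are valid. Your route is somewhat more self-contained, in that it does not lean on Example~\ref{Ex2.7} (which the paper states without proof), and it is very much in the spirit of the paper's own proof of Lemma~\ref{Lem6.2}~(2), which already uses surjectivity-plus-finiteness of $ \varphi $ to obtain injectivity; in effect you push that same argument one step further and obtain the cleaner intermediate statement that the group of displacements of a flat connected finite quandle has odd order. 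What the paper's route buys is economy: it reuses already-established quandle-level facts rather than introducing an additional group-theoretic argument, at the cost of deducing the arithmetic condition on the $ q_{i} $ only after the final isomorphism is in hand.
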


\begin{proof}
Let $ (G,K,\sigma) $ be the quandle triplet defined in Lemma~\ref{Lem5.1}. Note that this satisfies the assumptions in Proposition~\ref{Prop5.2} and Lemma~\ref{Lem6.2}. In particular, $ G $ is finite and commutative. Then the fundamental theorem of finite commutative groups yields that there exist prime powers $ q_{1}, \ldots, q_{n} $ such that
\begin{align*}
G \simeq \Z_{q_{1}} \times \cdots \times \Z_{q_{n}},
\end{align*}
which is an isomorphism as groups. Hence, the inverse element of $ g \in G $ can be written as $ -g $. It then follows from Lemma~\ref{Lem6.2} (3) that
\begin{align*}
\sigma = -\id_{\Z_{q_{1}} \times \cdots \times \Z_{q_{n}}}.
\end{align*}
Therefore, Lemma~\ref{Lem5.1} and Proposition~\ref{Prop3.9} show that
\begin{align*}
(X,s) \simeq Q(\Z_{q_{1}} \times \cdots \times \Z_{q_{n}}, \{ 0 \}, -\id_{\Z_{q_{1}} \times \cdots \times \Z_{q_{n}}}) \simeq R_{q_{1}} \times \cdots \times R_{q_{n}}.
\end{align*}
It remains to show that $ q_{1}, \ldots, q_{n} $ are odd. Since $ (X,s) $ is connected, Proposition~\ref{Prop2.9} yields that all of $ R_{q_{1}}, \ldots, R_{q_{n}} $ are connected. Therefore, from Example~\ref{Ex2.7}, we conclude that $ q_{1}, \ldots, q_{n} $ are odd.
\end{proof}

\end{document}